\documentclass[12pt,reqno]{amsart}

\usepackage[T1]{fontenc}
\usepackage{lmodern}
\usepackage[a4paper,margin=1.02in]{geometry}
\usepackage{amsmath,amssymb,amsthm,mathtools}
\usepackage{microtype}
\usepackage{enumitem}
\usepackage[colorlinks=true,linkcolor=blue,citecolor=blue,urlcolor=blue]{hyperref}

\newtheorem{theorem}{Theorem}[section]
\newtheorem{proposition}[theorem]{Proposition}
\newtheorem{lemma}[theorem]{Lemma}

\theoremstyle{definition}

\theoremstyle{remark}
\newtheorem{remark}[theorem]{Remark}

\newcommand{\Q}{\mathbb Q}
\newcommand{\Z}{\mathbb Z}
\newcommand{\R}{\mathbb R}
\newcommand{\Qp}{\mathbb Q_p}
\newcommand{\Zp}{\mathbb Z_p}

\newcommand{\calA}{\mathcal A}
\newcommand{\calB}{\mathcal B}
\newcommand{\calV}{\mathcal V}
\newcommand{\Span}{\operatorname{span}}

\title[On the $p$-adic Wirsing problem]{On the $p$-adic Wirsing problem}
\author{Anup B. Dixit}

\address{Department of Mathematics\\ Institute of Mathematical Sciences (HBNI)\\ CIT Campus, IV Cross Road\\ Chennai\\ India-600113}
\email{anupdixit@imsc.res.in}
\date{\today}

\begin{document}

\begin{abstract}
For a real transcendental number $\xi$, let $\omega_n^*(\xi)$ denote the supremum of all $\omega$ for which there exist infinitely many real algebraic numbers $\alpha$ of degree $\leq n$ satisfying $|\xi-\alpha|\leq H(\alpha)^{-\omega -1}$, where $H(\alpha)$ is the naive height of the minimal polynomial of $\alpha$. A celebrated result of Wirsing gives the uniform lower bound $\omega_n^*(\xi)\geq\frac{n+1}{2}$, which was improved significantly in a recent work of Po\"els to $\frac{n}{2-\log 2}$. In this paper, we establish a $p$-adic counterpart of Po\"els's result. Let $p$ be a prime and $\xi\in\Qp$ be transcendental. Let $\omega_{n,p}^*(\xi)$ be the supremum of all real numbers $\omega$ for which there exist infinitely many algebraic numbers $\alpha \in \Qp$ of degree $\leq n$ such that $|\xi-\alpha|_p\leq H(\alpha)^{-\omega -1}$. We show that $\omega^*_{n,p}(\xi)\geq\frac{n}{2-\log 2}-1$. This improves the known lower bounds in the $p$-adic setting, namely the analogue of Wirsing's theorem, due to Morrison and Teuli\'e.
\end{abstract}

\subjclass[2020]{11J82, 11J61, 11J13}

\keywords{Wirsing's conjecture, $p$-adic approximation by algebraic numbers}
\maketitle

\bigskip
\section{\bf Introduction}
\bigskip
In 1842, Dirichlet \cite{Dirichlet} proved that for every irrational real
number $\xi$, there exist infinitely many rational numbers $a/q$ such that
\begin{equation}\label{eq:dirichlet-approximation}
0<\left|\xi-\frac aq\right|<\frac1{q^2}.
\end{equation}
This exponent $2$ cannot be improved to $2+\varepsilon$ for any
$\varepsilon>0$. More precisely, Roth \cite{Roth} proved that if $\xi$ is an
irrational algebraic number, then
$$
0<\left|\xi-\frac aq\right|<\frac1{q^{2+\varepsilon}}
$$
has only finitely many rational solutions $a/q$. However, the approximation
in \eqref{eq:dirichlet-approximation} can be improved by replacing
rational numbers with algebraic numbers of bounded degree.\\

For a non-zero polynomial $P(X)=a_0+a_1X+\cdots+a_dX^d\in\Z[X]$, its naive height is defined as
\begin{equation*}
    H(P)\coloneqq\max_{0\leq i\leq d}|a_i|.
\end{equation*}
For $\alpha \in \overline{\Q}\setminus \{0\}$, denote by $H(\alpha)$ the height of its minimal polynomial over $\Z$. For a real number $\xi$, let $\omega_n^*(\xi)$ denote the
supremum of all real numbers $\omega$ for which there are infinitely many real algebraic numbers $\alpha$ of degree at most $n$ satisfying
$$
0<|\xi-\alpha|\leq H(\alpha)^{-\omega-1}
$$
In \cite{Wirsing}, Wirsing proved that
$$
\omega_n^*(\xi)\geq\frac{n+1}{2}
$$
for $\xi\in\R$, not algebraic of degree $\leq n$. He conjectured the optimal lower bound to be $\omega_n^*(\xi)\geq n$. Davenport and Schmidt \cite{DavenportSchmidt} proved this conjecture for $n=2$ and it
remains open for every $n\geq3$. A significant improvement to Wirsing's theorem was made by Badziahin
and Schleischitz \cite{BadziahinSchleischitz}, who showed that for a real transcendental number $\xi$, $\omega_n^*(\xi) \geq n/\sqrt3$ for $n\geq4$. Most recently, Po\"els
\cite{PoelsWirsing} made a major breakthrough by introducing a new method that combines parametric geometry of numbers with generalized resultants. He proved that, for real transcendental $\xi$ and $n\geq2$,
\begin{equation}\label{eq:poels-bound}
\omega_n^*(\xi)\geq\frac{n}{2-\log2}.
\end{equation}
For a comprehensive account of the history of Wirsing's problem, we refer the reader to
\cite{BugeaudBook} and the excellent survey \cite{BugeaudSurvey}.\\

In this paper, we consider the $p$-adic analogue of Wirsing's problem. Fix a prime $p$ and normalize the $p$-adic absolute value by $|p|_p=p^{-1}$. Let $\xi\in\Qp$ be transcendental. Define $\omega^*_{n,p}(\xi)$ to be the
supremum of the real numbers $\omega$ for which there are infinitely many numbers $\alpha \in \Qp$ that are algebraic over $\Q$, with
$$
[\Q(\alpha):\Q]\leq n,
$$
satisfying
$$
0<|\xi-\alpha|_p\leq H(\alpha)^{-\omega-1}.
$$

\noindent
A $p$-adic counterpart of Wirsing's theorem was first obtained by Morrison \cite{Morrison} and subsequently developed by Teuli\'e \cite{Teulie}, who proved that
\begin{equation}\label{eq:morrison-bound}
\omega^*_{n,p}(\xi)\geq\frac{n+1}{2}
\end{equation}
whenever $\xi\in\Qp$ is not algebraic of degree at most $n$. Further results on approximation by algebraic numbers in the $p$-adic setting include the study of quadratic approximation exponents by Bugeaud and Pejkovi\'c \cite{BugeaudPejkovic} and a Khintchine-type theorem
for approximation by $p$-adic algebraic numbers due to Beresnevich, Bernik and Kovalevskaya \cite{BeresnevichBernikKovalevskaya}.\\

Our main theorem provides a $p$-adic counterpart to Po\"els's result.

\begin{theorem}\label{thm:main}
Let $p$ be a prime, let $n\geq2$, and let $\xi\in\Qp$ be transcendental. Then
$$
\omega^*_{n,p}(\xi)
\geq\frac{n}{2-\log2}-1.
$$
\end{theorem}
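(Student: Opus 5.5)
We transplant Poëls's argument to the $p$-adic setting. It has two ingredients: a parametric geometry of numbers for the relevant family of convex bodies, and a lower bound for generalized resultants. The loss of $1$ in Theorem \ref{thm:main} comes from converting small polynomial values into close algebraic roots, a step that is lossier $p$-adically.

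\textbf{Convex bodies and the parametric framework.} For a parameter $Q>1$ and $\lambda>0$ we consider the convex body
$$\mathcal C_\xi(Q)=\Bigl\{P\in\R^{n+1}\ :\ H(P)\le Q,\ \ \bigl|\textstyle\sum_i a_i\xi^i\bigr|_p\le Q^{-\lambda}\Bigr\}.$$
More precisely, we use the standard adelic/lattice model: the $\Z$-lattice of $P\in\Z[X]_{\le n}$ with $|P(\xi)|_p\le p^{-k}$ has index about $p^{k}$, which turns the $p$-adic condition into an archimedean lattice problem of covolume $p^k$.

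Applying Roy's parametric geometry of numbers (Schmidt–Summerer $n$-systems) to this family of lattices, we obtain for each $Q$ the successive minima $\lambda_1,\dots,\lambda_{n+1}$. Minkowski's second theorem gives $\prod \lambda_j\asymp p^{k}$. Following Poëls, we extract from the combined graph a sequence of \emph{irreducible} polynomials $P$ of degree $\le n$, each with $H(P)=X$ and $|P(\xi)|_p\le X^{-\omega-1}$ up to constants, for $\omega$ close to the target exponent. Irreducibility and linear independence of consecutive minimal points are obtained as in the real case, since that part of the argument is purely combinatorial on the integer points.

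\textbf{From small values to close roots.} If $|P(\xi)|_p$ is small and $|P'(\xi)|_p$ is not too small, the $p$-adic Newton/Hensel lemma gives a root $\alpha\in\Qp$ of $P$ with
$$|\xi-\alpha|_p\le |P(\xi)|_p/|P'(\xi)|_p .$$
In the real case, Poëls exploits a trade-off between $|P(\xi)|$ and $|P'(\xi)|$ via the usual estimate $|\xi-\alpha|\ll H^{\,n-1}|P(\xi)|$ in a suitable form. The $p$-adic analogue has only the cruder lower bound $|P'(\xi)|_p\gg H(P)^{-1}\cdot(\text{stuff})$, which costs one power of the height. This is the source of the ``$-1$'', and we accept it rather than optimize.

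\textbf{Generalized resultants.} The core step is Poëls's estimate bounding the common smallness at $\xi$ of several independent polynomials. Concretely, for $P,Q$ coprime of degrees $\le n$ we use the resultant $\operatorname{Res}(P,Q)\in\Z\setminus\{0\}$. Writing it as a combination $AP+BQ$ gives
$$1\le|\operatorname{Res}|_\infty,\qquad |\operatorname{Res}|_p\le \max(|P(\xi)|_p,|Q(\xi)|_p).$$
The product formula then yields $1\le |\operatorname{Res}|_\infty\,|\operatorname{Res}|_p \ll H(P)^{n}H(Q)^{n}\max(|P(\xi)|_p,|Q(\xi)|_p)$. Poëls's generalized version for $k$ polynomials spanning a subspace of dimension $k$ is proved by an exterior-algebra computation. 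We will redo that computation with the $p$-adic norm at the place of $\xi$, using the product formula in place of the archimedean lower bound $|\text{nonzero integer}|\ge1$.

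We then feed these estimates into the same extremal analysis of $n$-systems. Maximizing over the possible shapes yields the constant $\frac{n}{2-\log 2}$, and subtracting the Hensel loss gives the stated bound $\frac{n}{2-\log 2}-1$.

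\textbf{Main obstacle.} We expect the hardest step to be the $p$-adic generalized resultant estimate with uniform constants. Lower-bounding $|P'(\xi)|_p$ simultaneously along the chosen sequence is also delicate. We would first try to reduce it to the two-polynomial resultant combined with Poëls's inductive scheme, checking at each step that the real-case inequalities hold with $|\cdot|$ replaced by $|\cdot|_p$ on the small side and $H$ on the large side.
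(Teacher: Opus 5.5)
Your outline has a genuine gap at the step that produces the algebraic approximation. You plan to apply Hensel's lemma to a single extracted polynomial $P$ with small $|P(\xi)|_p$, using a lower bound of the shape $|P'(\xi)|_p\gg H(P)^{-1}\cdot(\text{stuff})$. No such bound holds for an individual polynomial: $P'(\xi)$ may be $p$-adically tiny as well, and controlling it is the whole difficulty. Your resultant step cannot supply it. The inequality $1\le|\operatorname{Res}|_\infty|\operatorname{Res}|_p\ll H(P)^nH(Q)^n\max(|P(\xi)|_p,|Q(\xi)|_p)$, and any multi-polynomial version of it, only bounds the common smallness of \emph{values} from below, so it yields neither a derivative bound nor a root.

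The missing idea is to rewrite the non-zero integer determinant $D$ of the generalized-resultant basis $S_0,\dots,S_N$ ($N=2n-k+1$) in divided-Taylor coordinates at $\xi$. Because the change of basis $U_\xi$ is unipotent, $D=\det\bigl(S_j^{[i]}(\xi)\bigr)_{0\le i,j\le N}$. Each Leibniz term contains one row-$0$ entry of size $\le\delta$ and one row-$1$ entry, and all other entries lie in $\Z_p$. The product formula gives $|D|_p\ge|D|_\infty^{-1}\gg B_k^{-1}$. Hence some $S=S_j$ satisfies $|S(\xi)|_p\le\delta$ and $|S'(\xi)|_p\gg(\delta B_k)^{-1}$.

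Hensel is then applied to this $S$, not to your $P$, once $\delta^3B_k^2\to0$, giving $|\xi-\alpha|_p\ll\delta^2B_k$. Since $S=X^t(X-\lambda_i)^eP_i$ may have degree up to $2n-k+1$, you must still exclude the auxiliary roots $0,1,\dots,2n+2$ via $|\xi-a|_p\ge c_{\xi,n}>0$. Only then is $\alpha$ a root of some $P_m$, with degree $\le n$ and $H(\alpha)\ll H_m$. This is also where the $-1$ actually comes from. The archimedean bound $|D|_\infty\ll H_1^{n-k+2}H_2^{n-k+2}H_3\cdots H_k$ cannot use the $p$-adic smallness of row $0$, so it keeps one more factor of $H_1$ than Po\"els's real bound. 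The loss is not due to Hensel being cruder than real root location.

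The input to this argument is also missing. You need, along a sequence, $n+1$ independent polynomials with $P_1,P_2$ coprime, $H_1\asymp H_2$, and $\max_i|P_i(\xi)|_p\ll H_1^{-1}H_2^{-x+o(1)}$, where $x=\log(H_2\cdots H_{n+1})/\log H_2$. Only then is $\delta^2B_k\le H_2^{-D_k+o(1)}$ with $D_k=A_k(x,\mathbf a)+1+o(1)$, so that $F_n\ge n/(2-\log2)$ can act. Invoking a Roy-type theorem for the lattices $\Lambda_h(\xi)$ is neither established nor what produces this family.

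The paper proceeds as follows:
\begin{enumerate}
\item It disposes of $\tau=w_{n,p}(\xi)=\infty$ via $\omega^*_{n,p}\ge w_{n,p}-n+1$.
\item It takes irreducible seeds $P$ with $\nu_\xi(P)\to\tau$, and a shortest vector $Q$ of $\Lambda_h(\xi)$ with $cH(P)/p<H(Q)\le cH(P)$. This $Q$ is coprime to $P$ because $c<k_n^{-1}$.
\item It completes $Q,P$ by successive minima so that $\prod_iH_i\ll p^h=|Q(\xi)|_p^{-1}$.
\item It applies the extremality of $\tau$ to $Q$ to force $y\le\tau+o(1)$, which transfers the bound to $|P(\xi)|_p$.
\end{enumerate}
You would also need to verify $\delta^3B_k^2\to0$ for the optimizing index $k$. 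The paper does this using the $k=2$ term $2x-2n+1$ together with $F_n\ge n/(2-\log2)$, which yields a uniform margin $\rho_n>0$.
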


This lower bound is stronger than \eqref{eq:morrison-bound} for every $n\geq6$. The loss of one relative to Po\"els's result \eqref{eq:poels-bound} reflects an additional height factor that appears in the $p$-adic determinant estimate.
Note that the approximation of $\xi$ is by algebraic numbers $\alpha$, which are required to lie in $\Qp$.
The other conjugates of $\alpha$ may lie in arbitrary completions.

\subsection{Strategy of the proof}

We adapt the method developed by Po\"els \cite{PoelsWirsing} in the real
setting. His argument begins with a family of integer polynomials having
controlled heights and small values at the number being approximated. It then
introduces and uses the notion of generalized resultants and a finite-dimensional minimization problem to
produce a polynomial with the estimates needed to locate a nearby real algebraic
number. The underlying successive-minima viewpoint comes from the parametric
geometry of numbers developed by Schmidt and Summerer
\cite{SchmidtSummerer2009,SchmidtSummerer2013} and Roy \cite{Roy}, while the use of
$n+1$ polynomials simultaneously originates in Po\"els's earlier work in
\cite{PoelsUniform}.

The main difficulty in mimicking Po\"els's argument is that there is no
$p$-adic analogue of Minkowski's convex body theorem suited to this
construction. We overcome this difficulty by introducing a family of ``congruence lattices'' adapted to
 polynomial evaluation at $\xi$, which recast the relevant $p$-adic conditions as conditions on an ordinary
lattice to which the classical geometry of numbers applies. We then adapt
Po\"els's generalized-resultant construction and minimization argument to
produce a polynomial satisfying the strong Hensel condition at $\xi$.
Hensel's lemma therefore yields the required algebraic approximation in
$\Qp$.

\bigskip
\section{\bf Preliminaries}
\bigskip

For a non-zero polynomial $P(X)=a_d\prod_{i=1}^d(X-\alpha_i)\in\mathbb C[X]$, its Mahler measure is defined as
$$
M(P):= |a_d|\prod_{i=1}^d\max\{1,|\alpha_i|\}.
$$
We shall use the standard estimates below (see
\cite[Sections~1.6--1.7]{BombieriGubler}).

\begin{lemma}\label{lem:height-mahler}
Let $P\in\Z[X]$ have degree at most $d$. Then
$$
2^{-d}H(P)\leq M(P)\leq\sqrt{d+1}\,H(P).
$$
If $R,S\in\Z[X]\setminus\{0\}$, then
$$
M(RS)=M(R)M(S).
$$
If $R$ divides $S$ in $\Z[X]$ and $\deg S\leq d$, then
$$
H(R)\leq 2^d\sqrt{d+1}\,H(S).
$$
\end{lemma}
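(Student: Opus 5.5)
The three assertions are classical, and I would prove them in the order (multiplicativity), (two-sided comparison), (divisor bound), since the last one follows from the first two.

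For multiplicativity, write $R=a\prod_i(X-\beta_i)$ and $S=b\prod_j(X-\gamma_j)$ over $\mathbb C$. Then $RS=ab\prod_i(X-\beta_i)\prod_j(X-\gamma_j)$. The leading coefficient is $ab$ and the multiset of roots is the union of the two multisets. So $M(RS)=|ab|\prod_i\max\{1,|\beta_i|\}\prod_j\max\{1,|\gamma_j|\}=M(R)M(S)$ directly from the definition. Integrality plays no role here.

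For the upper bound $M(P)\le\sqrt{d+1}\,H(P)$, I would use Jensen's formula. It gives
\[
\log M(P)=\int_0^1\log|P(e^{2\pi i t})|\,dt .
\]
Applying Jensen's inequality for the concave function $\log$, then Parseval on the unit circle, gives
\[
M(P)\le\Bigl(\int_0^1|P(e^{2\pi it})|^2\,dt\Bigr)^{1/2}=\Bigl(\sum_k|a_k|^2\Bigr)^{1/2}\le\sqrt{d+1}\,H(P).
\]
For the lower bound $2^{-d}H(P)\le M(P)$, write the coefficients as $a_{e-k}=(-1)^k a_e\,\sigma_k(\alpha_1,\dots,\alpha_e)$, where $e=\deg P\le d$. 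Each elementary symmetric function satisfies
\[
|\sigma_k|\le\binom{e}{k}\prod_i\max\{1,|\alpha_i|\}.
\]
Hence $|a_{e-k}|\le\binom{e}{k}M(P)\le 2^{e}M(P)\le 2^dM(P)$, and taking the maximum over $k$ gives the claim.

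For the divisor bound, write $S=RT$ with $T\in\Z[X]$. The key observation is that $T$ is a nonzero integer polynomial, so its leading coefficient $c$ satisfies $|c|\ge1$ and therefore $M(T)\ge1$. By multiplicativity, $M(R)=M(S)/M(T)\le M(S)$. Now apply the lower bound to $R$, using $\deg R\le d$, and the upper bound to $S$:
\[
H(R)\le 2^dM(R)\le 2^dM(S)\le 2^d\sqrt{d+1}\,H(S).
\]

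The only genuinely analytic input is Jensen's formula in the upper bound; everything else is bookkeeping. The one point that needs care is the integrality of the cofactor $T$, which is what gives $M(T)\ge1$. If $R$ divided $S$ only in $\Q[X]$, one would instead use Gauss's lemma to reduce to a primitive $R$, but the statement assumes divisibility in $\Z[X]$, so this step is immediate.
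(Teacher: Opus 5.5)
Your proof is correct and is the standard argument: the paper gives no proof of its own and only cites Bombieri--Gubler (Sections 1.6--1.7). Your steps match the usual route found there, namely multiplicativity from the factorization over $\mathbb{C}$, the upper bound $M(P)\le\|P\|_2\le\sqrt{d+1}\,H(P)$ via Jensen's formula and Parseval, the lower bound via elementary symmetric functions, and the divisor bound from $M(T)\ge 1$ for the integral cofactor $T$. You are also right that divisibility in $\Z[X]$ rather than $\Q[X]$ is what makes $M(R)\le M(S)$ valid, as the example $R=2X$, $S=X$ shows.
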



To obtain a nearby algebraic number from a polynomial satisfying suitable
$p$-adic estimates, we use the following strong form of Hensel's lemma (see
Gouv\^ea \cite{Gouvea}).

\begin{lemma}\label{lem:hensel}
Let $f\in\Qp[X]$ and $z\in\Qp$. If
$$
|f(z)|_p<|f'(z)|_p^2,
$$
then there exists $\alpha\in\Qp$ with $f(\alpha)=0$ and
$$
|z-\alpha|_p\leq\frac{|f(z)|_p}{|f'(z)|_p}.
$$
\end{lemma}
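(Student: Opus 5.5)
The approach is the classical Newton iteration, carried out under the normalization in which the lemma is used (and in which it is stated in Gouv\^ea \cite{Gouvea}): $f\in\Zp[X]$ and $z\in\Zp$. This normalization is needed. The hypothesis $|f(z)|_p<|f'(z)|_p^2$ is not invariant under replacing $f$ by $cf$ with $c\in\Qp^\times$, so the statement for arbitrary $f\in\Qp[X]$ has to be read this way. It does cover the later applications, where $f$ has integer coefficients and we may assume $\xi\in\Zp$ (otherwise pass to $1/\xi$, or translate). Set
$$
t:=\frac{|f(z)|_p}{|f'(z)|_p^2}<1 .
$$
If $f(z)=0$, take $\alpha=z$. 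Otherwise $f'(z)\neq0$.

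The key algebraic input is the Taylor expansion with integral coefficients. For $f\in\Zp[X]$ we can write
$$
f(x+h)=f(x)+f'(x)h+h^2g(x,h),\qquad f'(x+h)=f'(x)+h\,g_1(x,h),
$$
where $g,g_1\in\Zp[x,h]$; this holds because the divided coefficients $f^{(k)}/k!$ have integral coefficients. Define $z_0=z$ and $z_{k+1}=z_k-f(z_k)/f'(z_k)$. By induction on $k$ we prove three statements:
\begin{enumerate}[label=(\roman*)]
\item $z_k\in\Zp$;
\item $|f'(z_k)|_p=|f'(z)|_p$;
\item $|f(z_k)|_p\leq t^{2^k}|f'(z)|_p^2$.
\end{enumerate}
For the induction step put $h_k=-f(z_k)/f'(z_k)$. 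Then $|h_k|_p\leq t^{2^k}|f'(z)|_p<|f'(z)|_p\leq1$, so $z_{k+1}\in\Zp$. Next, $|h_kg_1|_p\le |h_k|_p<|f'(z_k)|_p$, which gives (ii) for $k+1$ by the ultrametric inequality. Finally, the first two Taylor terms cancel, so
$$
|f(z_{k+1})|_p\leq|h_k|_p^2\leq t^{2^{k+1}}|f'(z)|_p^2 .
$$

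From (ii) and (iii) we get $|z_{k+1}-z_k|_p=|h_k|_p\leq t^{2^k}|f'(z)|_p\to0$. Hence $(z_k)$ is Cauchy in the complete field $\Qp$ and converges to some $\alpha\in\Zp$. By continuity $f(\alpha)=\lim f(z_k)=0$. By the ultrametric inequality,
$$
|z-\alpha|_p\leq\max_k|h_k|_p=|h_0|_p=\frac{|f(z)|_p}{|f'(z)|_p},
$$
since the bounds on $|h_k|_p$ decrease in $k$.

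The only delicate point is the bookkeeping in step (ii): $f'$ must keep the same absolute value along the sequence. This is exactly where the strict inequality $t<1$ is used, through $|h_k|_p<|f'(z)|_p$. Everything else is routine.
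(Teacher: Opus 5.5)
Your Newton-iteration argument is correct and is the standard proof of the result; the paper does not prove the lemma but cites it from Gouv\^ea, so your approach is essentially the intended one. Your integrality normalization is genuinely necessary, not cosmetic: as printed, with $f\in\Qp[X]$, the lemma is false (take $p$ odd, $u\in\Zp^\times$ a non-square, $f=p^{-1}(X^2-u)$ and $z=1$, so that $|f(1)|_p\leq p<p^2=|f'(1)|_p^2$ while $f$ has no root in $\Qp$), and the version with $f\in\Zp[X]$, $z\in\Zp$ that you prove is exactly what Proposition~\ref{prop:local-resultant} needs, since there $S\in\Z[X]$ and $\xi\in\Zp$.
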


For $\xi\in \Qp$, define $w_{n,p}(\xi)$ as the supremum of real numbers $w$ for which
\begin{equation}\label{defn:w_{n,p}}
0<|P(\xi)|_p\leq H(P)^{-w-1}
\end{equation}
has infinitely many solutions $P\in\Z[X]$ of degree at most $n$. For a non-zero polynomial $P\in\Z[X]$ define
$$
\Delta_\xi(P)\coloneqq M(P)|P(\xi)|_p.
$$
Since both factors are multiplicative, we have
$$
\Delta_\xi(PQ)=\Delta_\xi(P)\Delta_\xi(Q).
$$
If $M(P)>1$, define
$$
\nu_\xi(P)
\coloneqq-\frac{\log\Delta_\xi(P)}{\log M(P)}
$$
or equivalently,
$$
\Delta_\xi(P)=M(P)^{-\nu_\xi(P)}.
$$

\begin{lemma}\label{lem:factor-lower}
Let $\xi\in\Qp$ be transcendental and $P(X)\in\Z[X]$ be a non-zero polynomial of degree $\leq n$. Let $V\geq 0$ be such that for every non-constant irreducible factor $R$ of $P$ with $M(R)>1$ satisfies
$$
\nu_\xi(R)\leq V.
$$
Then, there exists a constant $\kappa=\kappa(n,p,\xi)>0$, independent of $P$ such that 
$$
\Delta_\xi(P)\geq\kappa M(P)^{-V}.
$$
\end{lemma}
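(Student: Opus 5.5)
Factor $P=c\,R_1^{e_1}\cdots R_k^{e_k}$ over $\Z$, with $c\in\Z\setminus\{0\}$ and the $R_i$ distinct irreducible primitive polynomials of positive degree. Since $\deg P\leq n$, we have $\sum e_i\deg R_i\leq n$, so there are at most $n$ factors counted with multiplicity. Because $\Delta_\xi$ is multiplicative,
$$
\Delta_\xi(P)=\Delta_\xi(c)\prod_i\Delta_\xi(R_i)^{e_i},\qquad M(P)=|c|\prod_i M(R_i)^{e_i}.
$$
The plan is to bound each factor from below by a constant times $M(\cdot)^{-V}$ and then multiply. Since at most $n+1$ factors occur, the constants combine into a single $\kappa(n,p,\xi)$.

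\emph{The constant $c$.} Here $\Delta_\xi(c)=|c|\,|c|_p\geq 1$, because $|c|_p\geq |c|^{-1}$ for a non-zero integer. As $V\geq0$ and $|c|\geq1$, this gives $\Delta_\xi(c)\geq 1\geq |c|^{-V}$.

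\emph{Factors with $M(R)>1$.} By hypothesis $\nu_\xi(R)\leq V$, so
$$
\Delta_\xi(R)=M(R)^{-\nu_\xi(R)}\geq M(R)^{-V}.
$$

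\emph{Factors with $M(R)=1$.} This is the step requiring care. Such $R$ is a non-constant irreducible primitive polynomial of degree $\leq n$ with $M(R)=1$. By Lemma~\ref{lem:height-mahler}, $H(R)\leq 2^n$, so there are only finitely many such $R$, and they depend only on $n$ (by Kronecker they are $\pm X$ and cyclotomic polynomials, but finiteness is all we need). Since $\xi$ is transcendental, $R(\xi)\neq0$ for each of them. Let $\kappa_0$ be the minimum of $|R(\xi)|_p$ over this finite set; it is positive and depends only on $n,p,\xi$. Then $\Delta_\xi(R)=|R(\xi)|_p\geq\kappa_0=\kappa_0M(R)^{-V}$.

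Putting these together, with $\kappa_0\leq1$ if we shrink it if necessary,
$$
\Delta_\xi(P)\geq \kappa_0^{\,n}\,|c|^{-V}\prod_i M(R_i)^{-Ve_i}=\kappa_0^{\,n}M(P)^{-V}.
$$
So $\kappa=\min\{1,\kappa_0\}^n$ works, and it is independent of $P$. The main point to get right is the third case: the hypothesis gives no information about factors with $M(R)=1$, and transcendence of $\xi$ combined with finiteness of such factors is what produces a uniform constant. The same observation handles the case $M(P)=1$.
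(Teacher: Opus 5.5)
Your proof is correct and follows essentially the same route as the paper: you factor out the integer constant (using $|c|\,|c|_p\geq 1$), apply the hypothesis $\nu_\xi(R)\leq V$ to factors with $M(R)>1$, and control the factors with $M(R)=1$ by finiteness together with the transcendence of $\xi$. Your version is a bit more explicit than the paper's. You obtain finiteness directly from $H(R)\leq 2^nM(R)$ in Lemma~\ref{lem:height-mahler}, and you make the uniformity of the constant visible through $\kappa=\min\{1,\kappa_0\}^n$. The paper leaves that bound on multiplicities implicit in its assertion $\Delta_\xi(U)\gg_{n,p,\xi}1$.
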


\begin{proof}
Write
$$
P=cU\prod_{j=1}^sR_j^{e_j},
$$
where $c\in\Z\setminus\{0\}$, all irreducible factors of $U$ have Mahler
measure $1$, and $M(R_j)>1$. We know that $\Delta_\xi(c)=|c||c|_p\geq1$ and $M(U)=1$. By Kronecker's theorem, only finitely many irreducible polynomials of degree at most $n$ have Mahler measure $1$. Since $\xi$ is transcendental, we also have $\Delta_\xi(U)\gg_{n,p,\xi}1$. Therefore, by multiplicativity of $\Delta_\xi$, we conclude
$$
\Delta_\xi(P)
\gg_{n,p,\xi}\prod_{j=1}^sM(R_j)^{-e_jV}
\geq M(P)^{-V}.
$$
\end{proof}

The quantity $\nu_\xi(P)$ measures the quality of approximation to $\xi$ by $P$, normalized with respect to the Mahler measure of $P$. Therefore, we define the following invariant.
\begin{equation}\label{eq:tau-definition}
\tau_{n,p}(\xi)
\coloneqq
\limsup_{\substack{P\in\Z[X]_{\leq n},\ M(P)>1\\H(P)\to\infty}}
\nu_\xi(P).
\end{equation}
\begin{lemma}\label{lem:dirichlet-quality}
Let $\xi\in\Qp$ be transcendental. Then
\begin{enumerate}[label=\textup{(\roman*)}]
    \item $$
            \tau_{n,p}(\xi)\geq n.
        $$
        \item The $\limsup$ in \eqref{eq:tau-definition} may be restricted to non-constant irreducible polynomials, i.e.,
        \begin{equation}\label{eq:irreducible-extraction}
            \tau_{n,p}(\xi)
                =
                \limsup_{\substack{P\in\Z[X]_{\leq n} \text{ irred},\\\ M(P)>1,\,\,H(P)\to\infty}}
                \nu_\xi(P).
        \end{equation}
        \item The exponent $\tau_{n,p}(\xi)$ agrees with the polynomial approximation
exponent introduced in \eqref{defn:w_{n,p}}, i.e.,
\begin{equation}\label{eq:tau-w}
\tau_{n,p}(\xi)=w_{n,p}(\xi).
\end{equation}
\end{enumerate}

\end{lemma}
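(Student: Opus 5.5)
For (i) I will use a $p$-adic Dirichlet-type argument via a congruence lattice. Fix a large integer $X$ and an integer $k\geq 1$. Let $\xi_k\in\Z$ satisfy $\xi_k\equiv\xi \pmod{p^k}$; this is possible after first replacing $\xi$ by $p^m\xi$ to land in $\Zp$, which changes $\nu_\xi$ only by bounded factors. Consider the lattice
$$\Lambda_k=\{(a_0,\dots,a_n)\in\Z^{n+1}: a_0+a_1\xi_k+\dots+a_n\xi_k^n\equiv 0 \pmod{p^k}\}.$$
It has index $p^k$ in $\Z^{n+1}$, since the linear form has coefficient $1$ on $a_0$. Minkowski's theorem applied to the box $\max|a_i|\leq X$ gives a nonzero $P\in\Lambda_k$ with $H(P)\leq X$ whenever $(2X)^{n+1}\geq 2^{n+1}p^k$. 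So we choose $k$ with $p^k\asymp X^{n+1}$. Then $|P(\xi)|_p\leq p^{-k}\ll X^{-n-1}$, and $P(\xi)\neq0$ by transcendence. By Lemma~\ref{lem:height-mahler}, $M(P)\ll H(P)\leq X$, so $\Delta_\xi(P)\ll X\cdot X^{-n-1}=X^{-n}$.

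The delicate point in (i) is to get $\nu_\xi(P)\geq n-\varepsilon$ with $H(P)\to\infty$. There are two cases.
\begin{enumerate}[label=\textup{(\alph*)}]
\item If $M(P)\geq X^{1-\delta}$, then $\Delta_\xi(P)\leq M(P)^{-n+O(\delta)}$, which is what we want.
\item If $M(P)$ is much smaller than $X$, then $\Delta_\xi(P)\ll X^{-n}$ is even smaller relative to $M(P)$, so $\nu_\xi(P)\geq n$ as long as $M(P)>1$.
\end{enumerate}
We still have to exclude $M(P)=1$ and bounded height. If $M(P)$ stayed bounded along a sequence $X\to\infty$, then $P$ would range over a finite set of polynomials with $|P(\xi)|_p\to0$, which forces $P(\xi)=0$ and contradicts transcendence. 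Similarly $M(P)=1$ means $P=cU$ with $U$ from the finite Kronecker set, and $\Delta_\xi(cU)\geq |U(\xi)|_p\gg1$, which contradicts $\Delta_\xi\ll X^{-n}$. Hence $M(P)\to\infty$, so $H(P)\to\infty$, and we get $\tau_{n,p}(\xi)\geq n$.

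For (ii), the inequality $\geq$ is trivial. For $\leq$, take $P$ with $\nu_\xi(P)\geq\tau-\varepsilon$ and $M(P)$ large, and suppose every irreducible factor $R$ with $M(R)>R_0$ has $\nu_\xi(R)\leq V$. I will run the proof of Lemma~\ref{lem:factor-lower} with the finitely many factors of Mahler measure $\leq R_0$ absorbed into the constant $\kappa$. This gives $\Delta_\xi(P)\geq\kappa' M(P)^{-V}$, so $\nu_\xi(P)\leq V+o(1)$ as $M(P)\to\infty$. Therefore the irreducible limsup is $\geq\tau$. Along a sequence of $P$ with $H(P)\to\infty$ and $\nu_\xi(P)$ bounded below by $n-1>0$, we have $M(P)\to\infty$ by the same finiteness argument as in (i). Constant factors $c$ only help, because $\Delta_\xi(c)\geq1$.

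For (iii), both directions come from the comparison $M(P)\asymp H(P)$ given by Lemma~\ref{lem:height-mahler}.
\begin{itemize}
\item Suppose $|P(\xi)|_p\leq H(P)^{-w-1}$. Then $\Delta_\xi(P)\ll H(P)^{-w}\ll M(P)^{-w}$, so $\nu_\xi(P)\geq w-o(1)$ as $H(P)\to\infty$. This gives $\tau\geq w_{n,p}$.
\item Conversely, suppose $\nu_\xi(P)\geq\nu$ with $H(P)\to\infty$. Then $|P(\xi)|_p=M(P)^{-\nu-1}\leq (c\,H(P))^{-\nu-1}$ for some constant $c$. This is $H(P)^{-\nu-1+o(1)}$, giving $w_{n,p}\geq\tau$.
\end{itemize}
In both directions we need $M(P)>1$ and $\nu>-1$ so that the signs work out. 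Part (i) supplies $\tau\geq n$, and on the $w$ side we may restrict to $w\geq n$ as well, which settles this.

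The main obstacle is bookkeeping in (i) and (ii): ensuring $M(P)\to\infty$ rather than only $H(P)\to\infty$. This is handled by Kronecker's theorem together with transcendence of $\xi$, as above.
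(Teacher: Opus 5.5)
Your proposal is correct and follows the paper's route. For (i) you use a Dirichlet-type argument: Minkowski's theorem on the congruence lattice replaces the paper's pigeonhole but gives the same estimate $H(P)\leq X$, $|P(\xi)|_p\ll X^{-n-1}$. For (ii) you use Lemma~\ref{lem:factor-lower} with the bounded-height irreducible factors absorbed into the constant, and for (iii) the comparison $\log M(P)=\log H(P)+O_n(1)$. Your case split in (i) and the concern about $M(P)\to\infty$ versus $H(P)\to\infty$ are unnecessary: Lemma~\ref{lem:height-mahler} gives $2^{-n}H(P)\leq M(P)\leq\sqrt{n+1}\,H(P)$, so $\Delta_\xi(P)\ll X^{-n}\ll M(P)^{-n}$ directly, and $M(P)\to\infty$ exactly when $H(P)\to\infty$.
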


\begin{proof}
Let $H\geq1$. Choose $h$ so that
$$
p^h<(H+1)^{n+1}\leq p^{h+1}.
$$
The $(H+1)^{n+1}$ polynomials with all coefficients in $\{0,1,\ldots,H\}$ take only $p^h$ possible values modulo $p^h\Zp$. Therefore, two of them have the same value. Their difference is a non-zero polynomial $P\in\Z[X]_{\leq n}$ with
$$
H(P)\leq H,
\qquad |P(\xi)|_p\leq p^{-h}<p(H+1)^{-n-1}.
$$
As $H\to\infty$, the heights of the resulting polynomials must tend to infinity; otherwise one fixed non-zero polynomial would have values of arbitrarily large $p$-adic valuation at the transcendental number $\xi$. Since $H(P)\leq H$,
$$
|P(\xi)|_p\ll H(P)^{-n-1}.
$$
Using $M(P)\ll_n H(P)$ gives
$$
\Delta_\xi(P)\ll_n H(P)^{-n},
$$
which proves (i). For (ii), write
$P(X)=cU\prod_{j=1}^sR_j(X)^{e_j}$ and put
$$
V(P)\coloneqq\max_{1\leq j\leq s}\nu_\xi(R_j),
$$
with $V(P)=0$ if $s=0$. By Lemma \ref{lem:factor-lower},
$$
\Delta_\xi(P)\geq\kappa M(P)^{-V(P)}
$$
and hence
$$
\nu_\xi(P)\leq V(P)+\frac{|\log\kappa|}{\log M(P)}.
$$
When $M(P) \to\infty$, the last factor tends to zero. Thus, one of the irreducible factors $R$ of $P$ satisfies $\nu_\xi (R)=\nu_\xi(P)+o(1)$ as $M(P)\to\infty$. If all such irreducible factors belonged to a fixed finite set, then, since $\deg P\leq n$, each of them could occur only with bounded multiplicity. Their total contribution to both $M(P)$ and $\Delta_\xi(P)$ would therefore remain bounded. Any unbounded growth of $M(P)$ would then have to come from the integer factor $c$. But for every non-zero integer $c$ we have $\Delta_\xi(c)=|c||c|_p \geq 1$, so such a factor cannot produce a positive limiting value of $\nu_\xi(P)$. Therefore, the $\limsup$ in \eqref{eq:tau-definition} may be restricted to non-constant irreducible polynomials as in \eqref{eq:irreducible-extraction}.

For (iii), note that for polynomials of degree at most $n$, by Lemma \ref{lem:height-mahler}
$$
\log M(P)=\log H(P)+O_n(1).
$$
Moreover,
$$
\nu_\xi(P)
=
-1-\frac{\log|P(\xi)|_p}{\log M(P)}.
$$
Taking upper limits along sequences of unbounded height and replacing $\log M(P)$ by $\log H(P)$, we deduce \eqref{eq:tau-w}.
\end{proof}

A critical role in the proof is played by a minimization bound. For $x\geq n$, let $\calA(x)$ be the set of
vectors $\mathbf a=(a_2,\ldots,a_{n+1})\in\R^n$ satisfying
$$
1=a_2\leq a_3\leq\cdots\leq a_{n+1},
\qquad \sum_{i=2}^{n+1}a_i=x.
$$
For $2\leq k\leq n+1$, put
$$
A_k(x,\mathbf a)
\coloneqq2x-2(n-k+1)-\sum_{i=2}^k a_i,
$$
$$
F(x,\mathbf a)\coloneqq\max_{2\leq k\leq n+1}
\frac{A_k(x,\mathbf a)}{a_k}
\qquad \text{and}\qquad
F_n\coloneqq\inf_{x\geq n}\ \min_{\mathbf a\in\calA(x)}F(x,\mathbf a).
$$

Po\"els \cite[Theorem~8.1]{PoelsWirsing} established the following lower
bound for $F_n$. His minimization argument is inspired by a method of de la
Vall\'ee Poussin \cite[Chapter~VI]{ValleePoussin}.
\begin{theorem}[Po\"els]\label{thm:optimization}
For every integer $n\geq2$,
$$
F_n\geq\frac{n}{2-\log 2}.
$$
\end{theorem}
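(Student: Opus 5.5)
The plan is to argue by contradiction. Fix $x\geq n$ and $\mathbf a\in\calA(x)$, and suppose $F(x,\mathbf a)<c$ with $c<\frac{n}{2-\log 2}$. Write $S_k\coloneqq\sum_{i=2}^k a_i$, so that $S_1=0$, $S_2=1$, $S_{n+1}=x$ and $a_k=S_k-S_{k-1}$. The hypothesis $A_k(x,\mathbf a)<c\,a_k$ for every $2\leq k\leq n+1$ then becomes a family of linear recursive inequalities:
$$
(1+c)S_k-cS_{k-1}>2x-2(n-k+1),\qquad 2\leq k\leq n+1 .
$$
In parallel I will use the constraints built into $\calA(x)$. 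Monotonicity $a_k\geq a_{k-1}$ makes $k\mapsto S_k$ convex, and $a_i\geq 1$ gives $S_k\geq k-1$.

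Two consequences come out immediately. The case $k=n+1$ yields $c\,a_{n+1}>x$, so the last gap is large. Convexity gives $S_k\leq \frac{k-1}{n}x$, so the partial sums cannot front-load. The main computation iterates the recursion
$$
S_k>rS_{k-1}+\frac{2x-2(n-k+1)}{1+c},\qquad r=\frac{c}{1+c},
$$
from a suitable starting index $k_0$ up to $n+1$. This gives
$$
x=S_{n+1}>r^{\,n+1-k_0}S_{k_0}+\frac1{1+c}\sum_{k=k_0+1}^{n+1}r^{\,n+1-k}\bigl(2x-2(n-k+1)\bigr).
$$
At the start I bound $S_{k_0}$ below either by $k_0-1$ or by the convexity/monotonicity information, whichever is stronger.

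With $c=\gamma n$ and $r^{j}\approx e^{-j/(\gamma n)}$, the sum is a Riemann sum of an explicit exponential integral in $t=k/n$. Optimizing over the starting point $t_0=k_0/n$ and over $x/n\geq1$ should produce the inequality $1>\Phi(\gamma)$ for an explicit $\Phi$. The threshold $\Phi(\gamma)=1$ should occur exactly at $\gamma=1/(2-\log 2)$; the $\log 2$ arises from the point where the forcing term $2x-2(n-k+1)$ overtakes the convexity bound, roughly halving an exponential. This gives the contradiction for $c<\frac{n}{2-\log 2}$.

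The hardest step is making this exact at finite $n$ rather than only asymptotically. Since $F_n\geq\frac{n}{2-\log 2}$ is claimed for every $n\geq2$, I will avoid continuous approximations. Instead I will use the discrete inequality $r^{j}\geq 1-j/(1+c)$ or, better, the exact geometric sums, together with an elementary inequality of the form $\sum_{j}\frac1{c+j}\leq\log\frac{\cdot}{\cdot}$ comparing harmonic sums with $\log 2$. This is in the spirit of de la Vall\'ee Poussin's method. Concretely, I would first try choosing $k_0$ to be the first index with $2x-2(n-k_0+1)\geq c\,a_{k_0}$, and then check the resulting inequality separately in the ranges $x\leq 2n$ and $x>2n$. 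The case $x>2n$ should be easy, because there every $A_k$ is already large.
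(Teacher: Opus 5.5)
\textbf{Verdict: the strategy is right, but the proof has a genuine gap.} The paper gives no argument of its own here; it imports the bound from Po\"els [Theorem 8.1], so your outline has to stand alone. Your reformulation $(1+c)S_k-cS_{k-1}\ge 2x-2(n-k+1)$ and the plan of iterating it from a switching index are the right framework. In fact the greedy sequence ($a_k=1$ as long as $A_k\le c\,a_k$ allows, equality afterwards) is extremal, so this route computes $F_n$ exactly. But the proposal stops exactly where the content of the theorem lies, and parts of the set-up are off.

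Only $S_{k_0}\ge k_0-1$ can start the recursion. Convexity gives the \emph{upper} bound $S_k\le\frac{k-1}{n}x$, which is useless in a recursion that is increasing in $S_{k-1}$. Your iterated bound $\Psi(k_0)$ satisfies $\Psi(k_0)-\Psi(k_0-1)=r^{n+1-k_0}(c+1+2n-2x-k_0)/(1+c)$. So it is maximal at $k_0=\lfloor c+2n+1-2x\rfloor$, which depends only on $(x,c)$. An index read off from $\mathbf a$, such as your first $k$ with $2x-2(n-k+1)\ge c\,a_k$, gives a smaller bound with no reason to exceed $x$. With the right $k_0$, exact summation (in the equality range one has $2-a_{k+1}=r(2-a_k)$) reduces the theorem to the following. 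Put $m=\lceil 2x-n-c\rceil\in[1,n-1]$ and $\theta=m-(2x-n-c)\in[0,1)$; one must show
\[
G(m,\theta)\coloneqq\frac{n+m-\theta-3c}{2}+(c+\theta)\,r^{m}>0
\qquad\text{whenever } c<\frac{n}{2-\log 2}.
\]
This inequality is what actually has to be proved. As a sanity check, for $n=2$ it gives $G(1,0)=0$ exactly at $c=\sqrt3$, so $F_2=\sqrt3$.

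The slack in this inequality is only a constant. Minimizing over real $m$ gives $2G\ge n-c(2-\log 2)+\frac{\log 2}{2}+O(1/c)$. The worst case is $r^m\approx\frac12$, i.e.\ $m\approx c\log 2$; this optimization over $x$, not the switch point, is where $\log 2$ enters. The finite-$n$ tools you propose destroy this slack:
\begin{itemize}
\item Bernoulli's $r^m\ge 1-m/(1+c)$ gives about $1-\log 2\approx 0.31$ instead of $\frac12$ at that $m$, a loss of order $n$.
\item Even $r^m\ge e^{-m/c}$ (from $\log(1+1/c)\le 1/c$) loses about $\frac{\log 2}{2}$ in $2G$. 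It leaves $n-c(2-\log 2)-\theta^2/(2c)+\cdots$, which is negative for $c$ just below $n/(2-\log 2)$. For instance, $n=10$, $m=6$, $\theta=0.9$ makes this bound negative, while the true $G$ there is about $0.16$.
\item The harmonic-sum comparison does not address the issue, since the ratio $r$ is constant.
\end{itemize}
What is needed is the trapezoid bound $\log(1+1/c)\le\frac{2c+1}{2c(c+1)}=\frac{1}{c+1/2}+O(c^{-3})$, explicit control of the remaining $O(1/c)$ terms, and a direct check of small $n$. As written, your argument is a correct heuristic for the asymptotic constant, not a proof of $F_n\ge n/(2-\log 2)$ for every $n\ge 2$.
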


\bigskip

\section{\bf Congruence lattices and geometry of numbers}
\bigskip


Let $\xi\in\Zp$ be transcendental. The condition
$|P(\xi)|_p\leq p^{-h}$ is equivalent to a linear congruence on the
coefficient vector of $P$, which we exploit to construct a sublattice of $\Z^{n+1}$.\\ 

For every integer $h\geq0$, define
$$
\Lambda_h(\xi)
\coloneqq
\{P\in\Z[X]_{\leq n}:|P(\xi)|_p\leq p^{-h}\}.
$$
We identify $\Z[X]_{\leq n}$ with $\Z^{n+1}$ through coefficient vectors. Since $\xi\in\Zp$, evaluation at $\xi$ takes integer polynomials to $\Zp$. Consider the map 
$$
\Phi_h:\Z[X]_{\leq n}\longrightarrow\Zp/p^h\Zp,
\qquad
P\longmapsto P(\xi)\bmod p^h\Zp.
$$
This is surjective since constant integer polynomials represent every residue class modulo $p^h$ and
$$
\ker\Phi_h=\Lambda_h(\xi).
$$
Consequently,
$$
\Z[X]_{\leq n}/\Lambda_h(\xi)
\cong
\Zp/p^h\Zp
$$
and
$$
[\Z^{n+1}:\Lambda_h(\xi)]=p^h.
$$
In particular $\Lambda_h(\xi)$ is a full-rank Euclidean lattice in $\R^{n+1}$. Indeed,
$$
p^h\Z^{n+1}\subseteq\Lambda_h(\xi)\subseteq\Z^{n+1}.
$$
Define the unit cube
$$
\mathcal C
\coloneqq
\{P\in\R[X]_{\leq n}:H(P)\leq 1\}.
$$
For $1\leq i\leq n+1$, the $i$-th successive minimum of $\mathcal C$ with respect to $\Lambda_h(\xi)$ is defined as
$$
\lambda_i(h)
\coloneqq
\inf\left\{
\lambda>0:
\dim_{\R}\Span_{\R}\bigl(\Lambda_h(\xi)\cap\lambda\mathcal C\bigr)\geq i
\right\}.
$$
Thus
$$
\lambda_1(h)\leq\lambda_2(h)\leq\cdots\leq\lambda_{n+1}(h).
$$
Equivalently, $\lambda_i(h)$ is the least real number $T$ for which there are $i$ linearly independent polynomials
$$
P_1,\ldots,P_i\in\Z[X]_{\leq n}
$$
satisfying simultaneously
$$
H(P_j)\leq T,
\qquad
|P_j(\xi)|_p\leq p^{-h}
$$
for $1\leq j\leq i$. In particular,
$$
\lambda_1(h)
=
\min_{\substack{0\neq P\in\Z[X]_{\leq n}\\ |P(\xi)|_p\leq p^{-h}}}H(P).
$$

\begin{lemma}\label{lem:lattice-minima}
For every $h\geq0$,
$$
\lambda_1(h)\cdots\lambda_{n+1}(h)\asymp_n p^h.
$$
If $Q\in\Lambda_h(\xi)$ satisfies $H(Q)=\lambda_1(h)$, then $Q$ can be extended to linearly independent polynomials
$$
Q=R_1,R_2,\ldots,R_{n+1}\in\Lambda_h(\xi)
$$
such that
$$
H(R_i)\leq\lambda_i(h)
$$
for every $i$, and therefore
$$
\prod_{i=1}^{n+1}H(R_i)\ll_n p^h.
$$
\end{lemma}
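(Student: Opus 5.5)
The plan is to apply Minkowski's second theorem to the symmetric convex body $\mathcal C$ and the lattice $\Lambda_h(\xi)$, and then to build the extended family $R_i$ from the vectors that realise the successive minima.

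\textbf{Step 1: the product estimate.} The body $\mathcal C$ is the cube $[-1,1]^{n+1}$ in coefficient coordinates, so $\operatorname{vol}(\mathcal C)=2^{n+1}$. From the index computation above, $\det\Lambda_h(\xi)=[\Z^{n+1}:\Lambda_h(\xi)]=p^h$. Minkowski's second theorem then gives
$$
\frac{2^{n+1}}{(n+1)!}\,\frac{p^h}{2^{n+1}}\leq\lambda_1(h)\cdots\lambda_{n+1}(h)\,\frac{\operatorname{vol}(\mathcal C)}{2^{n+1}}\leq p^h,
$$
that is,
$$
\frac{p^h}{(n+1)!}\leq\prod_{i=1}^{n+1}\lambda_i(h)\leq p^h.
$$
This is the first assertion.

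\textbf{Step 2: the extension.} Because $\Lambda_h(\xi)$ is discrete, each infimum defining $\lambda_i(h)$ is attained. I build $R_1,\dots,R_{n+1}$ greedily. Set $R_1=Q$, which is a nonzero lattice point with $H(Q)=\lambda_1(h)$.

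Now suppose $R_1,\dots,R_{i-1}$ are linearly independent with $H(R_j)\leq\lambda_j(h)$. The set $\Lambda_h(\xi)\cap\lambda_i(h)\mathcal C$ spans a space of dimension at least $i$. It therefore contains a lattice point $R_i$ outside $\Span_{\R}(R_1,\ldots,R_{i-1})$, and this point satisfies $H(R_i)\leq\lambda_i(h)$. Continuing to $i=n+1$ gives the required basis of $\R^{n+1}$.

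\textbf{Step 3: the product of heights.} Multiplying the bounds from Step 2,
$$
\prod_{i=1}^{n+1}H(R_i)\leq\prod_{i=1}^{n+1}\lambda_i(h)\leq p^h,
$$
which is the final claim.

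\textbf{Expected obstacle.} There is no substantial difficulty. The only points needing care are that the lower bound in Minkowski's theorem depends only on $n$, and that the greedy choice works even when several minima coincide. Both are standard.
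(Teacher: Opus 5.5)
Your proof is correct and follows the paper's argument essentially verbatim. Minkowski's second theorem with $\operatorname{vol}(\mathcal C)=2^{n+1}$ and $\det\Lambda_h(\xi)=p^h$ gives $p^h/(n+1)!\leq\prod_i\lambda_i(h)\leq p^h$. The extension of $Q$ then comes from greedily choosing, at each step, a lattice point of $\Lambda_h(\xi)\cap\lambda_i(h)\mathcal C$ outside the span of the previously chosen vectors.
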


\begin{proof}
The ambient dimension is $d=n+1$ and the coefficient cube has volume $2^d$.
Applying Minkowski's second theorem (see Cassels
\cite[Chapter~VIII]{Cassels}) gives
$$
\frac{p^h}{(n+1)!}
\leq
\lambda_1(h)\cdots\lambda_{n+1}(h)
\leq
p^h.
$$
Since the lattice is discrete and the coefficient cube is compact, the successive minima are attained. To include a prescribed shortest vector, we proceed inductively. Suppose that $Q,R_2,\ldots,R_{i-1}$ have been chosen. By the definition of $\lambda_i(h)$, there are $i$ independent lattice vectors of height at most $\lambda_i(h)$. At least one lies outside the $(i-1)$-dimensional span of the already chosen vectors and we choose it as $R_i$.
\end{proof}

Also, note that $\lambda_1(h)\longrightarrow\infty$ as $h\to\infty$. Otherwise, infinitely many lattices $\Lambda_h(\xi)$ would contain a non-zero polynomial from a fixed finite set. One such polynomial would belong to $\Lambda_h(\xi)$ for arbitrarily large $h$, and hence would vanish at $\xi$, which leads to a contradiction as $\xi$ is transcendental. 

\begin{lemma}\label{lem:shortest-vector}
Let $Y\geq1$, and let $h$ be the largest value with $\lambda_1(h)\leq Y.$ Then
$$
\frac{Y}{p}<\lambda_1(h)\leq Y.
$$
If $Q\in\Lambda_h(\xi)$ satisfies $H(Q)=\lambda_1(h)$, then
$$
|Q(\xi)|_p=p^{-h}.
$$
\end{lemma}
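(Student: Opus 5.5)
The argument rests on two facts: the chain of lattices $\Lambda_h(\xi)$ is nested, and each lattice has index $p$ in the previous one. We use these first to show that $h$ is well defined, then to get the lower bound $\lambda_1(h)>Y/p$, and finally to get the exact valuation of $Q(\xi)$.

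\emph{Monotonicity and existence of $h$.} Since $\Lambda_{h+1}(\xi)\subseteq\Lambda_h(\xi)$, the function $h\mapsto\lambda_1(h)$ is non-decreasing. We have $\lambda_1(0)=1$, because the constant polynomial $1$ lies in $\Lambda_0(\xi)=\Z[X]_{\leq n}$. Hence $\lambda_1(0)\leq Y$. We also noted after Lemma~\ref{lem:lattice-minima} that $\lambda_1(h)\to\infty$. So the set of $h$ with $\lambda_1(h)\leq Y$ is a non-empty finite initial segment, and its largest element $h$ exists. By maximality, $\lambda_1(h+1)>Y$. The upper bound $\lambda_1(h)\leq Y$ holds by the choice of $h$.

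\emph{Lower bound $\lambda_1(h)>Y/p$.} The key comparison is $\lambda_1(h+1)\leq p\,\lambda_1(h)$. Let $Q\in\Lambda_h(\xi)$ attain $\lambda_1(h)$. Then $pQ$ has integer coefficients, satisfies $|pQ(\xi)|_p=p^{-1}|Q(\xi)|_p\leq p^{-h-1}$, and has $H(pQ)=pH(Q)$. Therefore $pQ\in\Lambda_{h+1}(\xi)$, which gives $\lambda_1(h+1)\leq p\lambda_1(h)$. Combining this with $\lambda_1(h+1)>Y$ yields $\lambda_1(h)>Y/p$.

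\emph{Exact valuation.} Let $Q$ now be any element of $\Lambda_h(\xi)$ with $H(Q)=\lambda_1(h)$. We know $|Q(\xi)|_p\leq p^{-h}$. Suppose the inequality were strict. Since $|Q(\xi)|_p$ is a power of $p$, this would force $|Q(\xi)|_p\leq p^{-h-1}$, so $Q\in\Lambda_{h+1}(\xi)$. Then $\lambda_1(h+1)\leq H(Q)=\lambda_1(h)\leq Y$, which contradicts the maximality of $h$. Note that $Q(\xi)\neq0$ because $\xi$ is transcendental and $Q\neq0$. Hence $|Q(\xi)|_p=p^{-h}$.

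No step here is a real obstacle. The only point needing care is the inequality $\lambda_1(h+1)\leq p\lambda_1(h)$, which is what produces the factor $p$ in the lower bound; it comes directly from multiplying the minimal polynomial by $p$.
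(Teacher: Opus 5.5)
Your proof is correct and follows the paper's argument. The lower bound comes from $\lambda_1(h+1)\le p\,\lambda_1(h)$, which holds because $p\Lambda_h(\xi)\subseteq\Lambda_{h+1}(\xi)$, combined with the maximality of $h$; the exact valuation follows because $|Q(\xi)|_p\le p^{-h-1}$ would put $Q$ in $\Lambda_{h+1}(\xi)$. Your additional remarks (existence of $h$ via $\lambda_1(0)=1$ and $\lambda_1(h)\to\infty$, and $Q(\xi)\neq0$, so that $|Q(\xi)|_p$ is an exact power of $p$) only make explicit what the paper leaves implicit.
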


\begin{proof}
Since
$$
p\Lambda_h(\xi)\subseteq\Lambda_{h+1}(\xi),
$$
we have for any $h\geq 0$, $\lambda_1(h+1)\leq p\lambda_1(h)$. By the maximality condition,
$$
Y<\lambda_1(h+1)\leq p\lambda_1(h).
$$
If $|Q(\xi)|_p\leq p^{-h-1}$, then $Q\in\Lambda_{h+1}(\xi)$ and
$$
\lambda_1(h+1)\leq H(Q)\leq Y,
$$
a contradiction.
\end{proof}
\medskip


We now construct an independent family of polynomials containing a prescribed irreducible polynomial and satisfying suitable height bounds. The construction is analogous to that of Po\"els \cite[Lemma~4.1]{PoelsWirsing}. In the present $p$-adic setting, the role of the parametric convex bodies is played by the congruence lattices introduced above.\\

We will use the following elementary observation. For any
$P,R\in\Z[X]$, there exists $m\in\{0,1,2\}$ such that
\begin{equation}\label{eq:bounded-adjustment}
H(R)+2H(P) \geq H(R+mP)\geq H(P).
\end{equation}
Indeed, if $a$ is a coefficient of $P$ with $|a|=H(P)$ and $b$ is the
corresponding coefficient of $R$, then one of $b,b+a,b+2a$ has absolute value
at least $|a|$. The other inequality follows directly from the triangle inequality.\\

Fix a real number $c>0$ satisfying
$$
 c< (2^n\sqrt{n+1})^{-1} =: k_n^{-1}.
$$
Let $P\in\Z[X]_{\leq n}$ be an irreducible polynomial of sufficiently large height. Apply Lemma \ref{lem:shortest-vector} with
$$
Y\coloneqq cH(P).
$$
Let $h$ be the largest value with $\lambda_1(h)\leq Y$ and $Q$ be a shortest vector of $\Lambda_h(\xi)$ i.e., $H(Q)=\lambda_1(h)$. Then
$$
\frac{c}{p}H(P)<H(Q)\leq cH(P),
\qquad |Q(\xi)|_p=p^{-h}.
$$
Note that the polynomials $P$ and $Q$ are coprime. This is because if $P$ divided $Q$, then Lemma \ref{lem:height-mahler} would give
$$
H(P)\leq k_nH(Q)\leq k_ncH(P)<H(P),
$$
leading to a contradiction.\\

\begin{proposition}\label{prop:family}
For every irreducible seed polynomial $P\in\Z[X]_{\leq n}$ of sufficiently
large height, let $Q$ be as above. Then there are linearly independent polynomials
$$
P_1,\ldots,P_{n+1}\in\Z[X]_{\leq n}
$$
with the following properties. Writing $H_i\coloneqq H(P_i)$, we have
$$
P_1=Q,
\qquad P_2=P,
\qquad H_1<H_2\leq\cdots\leq H_{n+1},
$$
the polynomials $P_1$ and $P_2$ are coprime, and
$$
H_1\asymp_{n,p,c}H_2,
\qquad
p^{-h}\ll_{n,p,c}(H_1H_2\cdots H_{n+1})^{-1}.
$$
Finally,
$$
|P_i(\xi)|_p
\leq\max\{p^{-h},|P(\xi)|_p\}
$$
for every $i$.
\end{proposition}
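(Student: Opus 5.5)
We build the family from the successive minima of the congruence lattice $\Lambda_h(\xi)$, where $h$ and $Q$ are the ones fixed before the statement, and then insert the seed $P$ by an exchange argument. By Lemma~\ref{lem:lattice-minima}, extend $Q=R_1$ to linearly independent $R_1,\dots,R_{n+1}\in\Lambda_h(\xi)$ with $H(R_i)\leq\lambda_i(h)$ and $\prod_i H(R_i)\ll_n p^h$. Each $R_i$ is non-zero and lies in $\Lambda_h(\xi)$. Hence $H(R_i)\geq\lambda_1(h)=H(Q)>\frac{c}{p}H(P)$ by Lemma~\ref{lem:shortest-vector}. So every $R_i$ has height $\gg_{p,c} H(P)$.

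Next we insert $P$. Since $P$ and $Q$ are coprime and $P$ is irreducible of large height, they are not proportional, so $\{Q,P\}$ is linearly independent. The vectors $R_1,\dots,R_{n+1}$ form a basis of $\R^{n+1}$. By the Steinitz exchange lemma applied to $P$, there is an index $j_0\geq2$ such that $Q,P$ together with $\{R_i: i\neq 1,j_0\}$ are linearly independent. Concretely, $j_0$ is any index $\geq2$ for which the coefficient of $R_{j_0}$ in the expansion of $P$ is non-zero; such an index exists because $P$ is not a multiple of $Q$.

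We then fix the ordering of heights. For each remaining $R_i$, use \eqref{eq:bounded-adjustment} to pick $m_i\in\{0,1,2\}$ with $H(R_i+m_iP)\geq H(P)$. Adding multiples of $P$ does not affect linear independence of the family. Put $P_1=Q$ and $P_2=P$, and let $P_3,\dots,P_{n+1}$ be the polynomials $R_i+m_iP$ ($i\neq1,j_0$) listed in increasing order of height. Then:
\begin{itemize}
\item $H_1=H(Q)\leq cH(P)<H(P)=H_2\leq H_3\leq\cdots\leq H_{n+1}$;
\item $H_1\asymp H_2$ by Lemma~\ref{lem:shortest-vector};
\item coprimality of $P_1$ and $P_2$ was shown before the statement.
\end{itemize}
For the $p$-adic values, $|R_i(\xi)|_p\leq p^{-h}$ for all $i$, and the ultrametric inequality gives $|P_i(\xi)|_p\leq\max\{p^{-h},|P(\xi)|_p\}$.

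The only point that needs care is the product bound. By \eqref{eq:bounded-adjustment},
$$
H(R_i+m_iP)\leq H(R_i)+2H(P)\ll_{p,c}H(R_i),
$$
since $H(R_i)\gg H(P)$. Also $H_1H_2=H(Q)H(P)\ll_{p,c}H(Q)^2\leq H(R_1)H(R_{j_0})$, because $H(R_{j_0})\geq\lambda_1(h)=H(Q)$. Hence
$$
\prod_{i=1}^{n+1}H_i\ll_{n,p,c}\prod_{i=1}^{n+1}H(R_i)\ll_n p^h,
$$
which gives $p^{-h}\ll(H_1\cdots H_{n+1})^{-1}$. Here the dropped vector $R_{j_0}$ absorbs the factor $H(P)$, so no loss arises from the exchange.
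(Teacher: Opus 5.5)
Your proof is correct and follows the paper's argument essentially step for step. You extend $Q$ to a successive-minima family of $\Lambda_h(\xi)$, exchange some $R_{j_0}$ with $j_0\geq2$ for $P$, shift the remaining $R_i$ by $m_iP$ with $m_i\in\{0,1,2\}$, and bound the product of heights using $H(R_i)\geq\lambda_1(h)>\frac{c}{p}H(P)$. Your explicit comparison $H_1H_2\ll_{p,c}H(Q)^2\leq H(R_1)H(R_{j_0})$ simply spells out the paper's remark that replacing $R_{j_0}$ by $P$ changes the product of heights by at most a bounded factor.
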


\begin{proof}
Extend $Q=R_1$ to an independent successive-minima family
$$
R_1,R_2,\ldots,R_{n+1}
$$
as in Lemma~\ref{lem:lattice-minima}. Since $P$ and $Q$ are independent, when $P$ is written in the basis $R_1,\ldots,R_{n+1}$, at least one coefficient with index $j\geq2$ is non-zero. Thus
$$
Q,P,R_2,\ldots,\widehat{R_j},\ldots,R_{n+1}
$$
is independent. For each $i\neq1,j$, choose $m_i\in\{0,1,2\}$ as in
\eqref{eq:bounded-adjustment} and put
$$
\widetilde R_i\coloneqq R_i+m_iP.
$$
Adding multiples of $P$ preserves independence. Also,
$$
H(\widetilde R_i)\geq H(P),
$$
while
$$
H(\widetilde R_i)
\leq H(R_i)+2H(P)
\ll_{p,c}H(R_i),
$$
because
$$
H(R_i)\geq\lambda_1(h)>\frac{c}{p}H(P).
$$
After ordering the family
$$
Q,P,(\widetilde R_i)_{i\neq1,j}
$$
by height, $Q$ is the first member and $P$ is the second. Since
$$
H(P)\ll_{p,c}H(R_j),
$$
the replacement of $R_j$ by $P$ and the bounded adjustments change the product of heights by at most a fixed factor. Therefore Lemma~\ref{lem:lattice-minima} gives
$$
\prod_{i=1}^{n+1}H_i\ll_{n,p,c}p^h,
$$
as required. Finally, every $R_i$ belongs to $\Lambda_h(\xi)$, and $|m_i|_p\leq1$. Thus
$$
|R_i(\xi)+m_iP(\xi)|_p
\leq\max\{p^{-h},|P(\xi)|_p\}.
$$
\end{proof}


We now choose seed polynomials for which the associated families have suitably small $p$-adic
values at $\xi$. For a family from
Proposition~\ref{prop:family}, define
$$
x\coloneqq
\frac{\log(H(P_2)H(P_3)\cdots H(P_{n+1}))}{\log H(P_2)}
$$
and
$$
y\coloneqq
\frac{\log(M(P_2)M(P_3)\cdots M(P_{n+1}))}{\log M(P_2)}.
$$
Since $H(P_i)\geq H(P_2)$ for $i\geq2$, we have $x\geq n$.
If $L\coloneqq\log H(P_2)$, the bounded-degree comparison between height and Mahler measure gives
\begin{equation}\label{eq:x-y-comparison}
y=x+O_n\left(\frac{x+1}{L}\right).
\end{equation}
In particular, once $y$ is known to be bounded, the same relation first implies that $x$ is bounded and then gives $y=x+o(1)$ as $L\to\infty$.

\begin{proposition}\label{prop:good-seeds}
Suppose that
$$
\tau_{n,p}(\xi)<\infty.
$$
Then there exists a sequence of irreducible polynomials
$P^{(j)}\in\Z[X]_{\leq n}$ with
$$
H(P^{(j)})\longrightarrow\infty
$$
such that the family attached to $P^{(j)}$ by Proposition~\ref{prop:family} satisfies
$$
\max_{1\leq i\leq n+1}|P_i^{(j)}(\xi)|_p
\ll_{n,p,c,\xi}
H_{1,j}^{-1}H_{2,j}^{-x_j+\varepsilon_j},
$$
where $\varepsilon_j\to0$.
\end{proposition}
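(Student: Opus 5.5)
The plan is to take as seeds irreducible polynomials that nearly realize $\tau_{n,p}(\xi)$, and to show that the exponent $x$ attached to such a seed can never exceed $\tau_{n,p}(\xi)$ by much. Write $\tau\coloneqq\tau_{n,p}(\xi)<\infty$.

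\textbf{Choosing the seeds.} By Lemma~\ref{lem:dirichlet-quality}\,(ii), there is a sequence of non-constant irreducible $P^{(j)}\in\Z[X]_{\leq n}$ with $H(P^{(j)})\to\infty$ and $\nu_\xi(P^{(j)})\geq\tau-\delta_j$, where $\delta_j\to0$. Since $\log M(P)=\log H(P)+O_n(1)$ and $M(P)\asymp_n H(P)$, this becomes
\[
|P^{(j)}(\xi)|_p\leq H(P^{(j)})^{-\tau-1+\delta_j'}
\]
with $\delta_j'\to0$. Applying Proposition~\ref{prop:family} to $P^{(j)}$ gives a family $P_1^{(j)}=Q^{(j)}$, $P_2^{(j)}=P^{(j)}$, and so on, and an integer $h_j$ with
\[
|P_i^{(j)}(\xi)|_p\leq\max\{p^{-h_j},|P^{(j)}(\xi)|_p\}
\qquad\text{and}\qquad
p^{-h_j}\ll (H_{1,j}H_{2,j}\cdots H_{n+1,j})^{-1}=H_{1,j}^{-1}H_{2,j}^{-x_j}.
\]
So the $p^{-h_j}$ term is already of the required shape. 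The remaining task is to bound $|P^{(j)}(\xi)|_p$ by $H_{1,j}^{-1}H_{2,j}^{-x_j+\varepsilon_j}$. Because $H_{1,j}\asymp H_{2,j}$, this amounts to $|P^{(j)}(\xi)|_p\ll H_{2,j}^{-x_j-1+\varepsilon_j}$. Given the seed choice, it suffices to show
\[
x_j\leq\tau+o(1).
\]

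\textbf{Upper bound on $x_j$ (the key step).} Here I use $Q^{(j)}$ itself. Lemma~\ref{lem:shortest-vector} gives $|Q^{(j)}(\xi)|_p=p^{-h_j}$. Combined with $H(Q^{(j)})=H_{1,j}\asymp H_{2,j}$, this yields
\[
|Q^{(j)}(\xi)|_p\ll H(Q^{(j)})^{-1-x_j}\cdot O(1)^{x_j}.
\]
Moreover $H(Q^{(j)})\geq\lambda_1(h_j)\to\infty$, since $h_j\to\infty$ as $H(P^{(j)})\to\infty$. By Lemma~\ref{lem:dirichlet-quality}\,(iii), $\tau=w_{n,p}(\xi)$. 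Hence for every $\eta>0$ and all sufficiently large heights,
\[
|Q^{(j)}(\xi)|_p\geq H(Q^{(j)})^{-\tau-1-\eta}.
\]
Comparing the two bounds on $|Q^{(j)}(\xi)|_p$ gives
\[
x_j\bigl(\log H_{1,j}-O(1)\bigr)\leq(\tau+\eta)\log H_{1,j}+O(1).
\]
First this shows $x_j$ is bounded; then, letting $\eta\to0$, it shows $x_j\leq\tau+o(1)$.

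\textbf{Conclusion and main obstacle.} Putting the pieces together,
\[
|P^{(j)}(\xi)|_p\leq H_{2,j}^{-\tau-1+\delta_j'}\leq H_{2,j}^{-x_j-1+\varepsilon_j}\ll H_{1,j}^{-1}H_{2,j}^{-x_j+\varepsilon_j},
\]
again using $H_{1,j}\asymp H_{2,j}$. The bound for every $P_i^{(j)}$ then follows from the maximum in Proposition~\ref{prop:family}. The main obstacle is the bound on $x_j$: it needs $Q^{(j)}$ to have height tending to infinity and to realize exactly $p^{-h_j}$. It also requires absorbing the implied constants, which enter raised to a power proportional to $x_j$. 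This is why boundedness of $x_j$ must be extracted first, using $\tau<\infty$, before the $o(1)$ comparison. If needed, \eqref{eq:x-y-comparison} can be used to move between the height and Mahler-measure normalizations.
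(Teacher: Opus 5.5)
Your proof is correct and follows the paper's argument. You choose seeds with $\nu_\xi(P^{(j)})=\tau-o(1)$, then apply the lower bound coming from $\tau$ to the shortest vector $Q^{(j)}$; since $|Q^{(j)}(\xi)|_p=p^{-h_j}$, the product estimate forces $x_j\le\tau+o(1)$.

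The only difference is normalization. The paper runs the comparison with Mahler measures: it bounds $y_j$ through $\Delta_\xi(Q_j)$ and then invokes \eqref{eq:x-y-comparison}. You instead work directly with heights via $\tau=w_{n,p}(\xi)$. Also, $H_{1,j}<H_{2,j}$ already gives $H_{2,j}^{-x_j}\le H_{1,j}^{-x_j}$, so the $O(1)^{x_j}$ factor you guard against never actually arises.
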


\begin{proof}
Put
$$
\tau=\tau_{n,p}(\xi)<\infty.
$$
By \eqref{eq:irreducible-extraction}, choose irreducible polynomials
$P^{(j)}$ of unbounded height such that
$$
\nu_\xi(P^{(j)})=\tau-o(1)\qquad(H_{2,j}\to\infty).
$$
For each $j$, let $P_1^{(j)},\ldots,P_{n+1}^{(j)}$ be the family supplied by
Proposition~\ref{prop:family} for the seed $P^{(j)}$. Thus
$P_2^{(j)}=P^{(j)}$ and, writing $Q_j\coloneqq P_1^{(j)}$, we have
$$
H(Q_j)\asymp H(P^{(j)}),
$$
we also have
$$
M(Q_j)\asymp M(P^{(j)}).
$$
The product estimate in Proposition~\ref{prop:family}, together with the bounded-degree comparisons between height and Mahler measure, gives
\begin{equation}\label{eq:q-upper}
\begin{aligned}
\Delta_\xi(Q_j)
&=M(Q_j)|Q_j(\xi)|_p \ll
\left(\prod_{i=2}^{n+1}M(P_i^{(j)})\right)^{-1}=M(P^{(j)})^{-y_j}.
\end{aligned}
\end{equation}
On the other hand, the definition of $\tau$, applied to $Q_j$, gives
$$
\Delta_\xi(Q_j)\geq M(Q_j)^{-\tau-o(1)}
\qquad(H_{2,j}\to\infty).
$$
Since $M(Q_j)\asymp M(P^{(j)})$, comparison with
\eqref{eq:q-upper} yields
$$
y_j\leq\tau+o(1)\qquad(H_{2,j}\to\infty).
$$
Thus $(y_j)$ is bounded and by \eqref{eq:x-y-comparison}, 
$$
y_j=x_j+o(1)\qquad(H_{2,j}\to\infty).
$$
Consequently,
$$
\nu_\xi(P^{(j)})=\tau-o(1)
\geq y_j-o(1)=x_j-o(1)
\qquad(H_{2,j}\to\infty).
$$
Therefore
$$
\begin{aligned}
|P^{(j)}(\xi)|_p
&=M(P^{(j)})^{-1-\nu_\xi(P^{(j)})}
\leq M(P^{(j)})^{-1-y_j+o(1)}\\
&\ll H_{1,j}^{-1}H_{2,j}^{-x_j+o(1)}
\qquad(H_{2,j}\to\infty).
\end{aligned}
$$
By Proposition~\ref{prop:family}, we have $|Q_j(\xi)|_p=p^{-h_j}$, and the
product estimate in that proposition gives
$$
|Q_j(\xi)|_p\ll H_{1,j}^{-1}H_{2,j}^{-x_j}.
$$
Every other member of the family has $p$-adic value at most the maximum of
$|Q_j(\xi)|_p$ and $|P^{(j)}(\xi)|_p$, so the assertion follows.
\end{proof}

\bigskip
\section{\bf Generalized resultants}
\bigskip

The use of resultants to pass from small polynomial values to algebraic
approximations goes back to Wirsing \cite{Wirsing}. We begin by recalling the
classical construction.

Let $P,Q\in\R[X]$ have degrees $m$ and $\ell$, respectively. Their
resultant $\operatorname{Res}(P,Q)$ is, up to sign, the determinant of the
coefficient vectors of the $m+\ell$ polynomials
$$
P,XP,\ldots,X^{\ell-1}P,
\qquad
Q,XQ,\ldots,X^{m-1}Q.
$$
In particular, $\operatorname{Res}(P,Q)\neq0$ if and only if $P$ and $Q$ are
coprime, or equivalently, if and only if these polynomials form a basis of
$\R[X]_{<m+\ell}$. Po\"els's generalized-resultant construction replaces
$P$ and $Q$ by a finite family of polynomials and selects a basis from their
monomial multiples. We recall this construction and establish the existence of
the generalized-resultant basis required in the $p$-adic setting.\\

 For a
non-empty set $\mathcal S\subseteq\R[X]_{\leq n}$ and an integer $N\geq n$,
define
$$
\calB_N(\mathcal S)
\coloneqq
\{X^rP: P\in\mathcal S\setminus\{0\},\ 0\leq r\leq N-\deg P\},
$$
$$
\calV_N(\mathcal S)
\coloneqq\Span_{\R}\calB_N(\mathcal S),
\qquad
g_{\mathcal S}(N)\coloneqq\dim\calV_N(\mathcal S).
$$


We use the following dimension estimate of Po\"els
\cite[Lemma~5.2]{PoelsWirsing}.

\begin{lemma}\label{lem:intermediate-dimension}
Let $\mathcal S$ consist of $j$ linearly independent polynomials in $\R[X]_{\leq n}$, where $2\leq j\leq n+1$, and suppose their greatest common divisor is $1$. Then, for every $0\leq r\leq n-j+1$,
$$
\dim\calV_{n+r}(\mathcal S)\geq2r+j.
$$
\end{lemma}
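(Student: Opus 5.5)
We argue by induction on $r$, starting from the base case $r=0$. For $r=0$ we have $\calV_n(\mathcal S)\supseteq\Span_{\R}\mathcal S$, because each $P\in\mathcal S$ satisfies $\deg P\le n$ and therefore appears with exponent $0$ in $\calB_n(\mathcal S)$. The polynomials in $\mathcal S$ are linearly independent, so $\dim\calV_n(\mathcal S)\ge j$.

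For the inductive step, suppose $0\le r<n-j+1$ and $\dim\calV_{n+r}(\mathcal S)\ge 2r+j$. Write $V=\calV_{n+r}(\mathcal S)$. It is enough to show that $\dim\calV_{n+r+1}(\mathcal S)\ge\dim V+2$. Note that $\calV_{n+r+1}(\mathcal S)$ contains both $V$ and $XV$: if $X^sP\in\calB_{n+r}$, then $X^{s+1}P\in\calB_{n+r+1}$. So we need to bound $\dim(V+XV)$ from below. Since multiplication by $X$ is injective, $\dim(V+XV)=2\dim V-\dim(V\cap XV)$, and the task reduces to showing $\dim(V\cap XV)\le\dim V-2$.

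We will argue by contradiction. Suppose $\dim(V\cap XV)\ge\dim V-1$, which means $V+XV$ has dimension at most $\dim V+1$. We split into two cases.

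\emph{Case $\dim(V\cap XV)=\dim V$.} Then $XV\subseteq V$ inside the finite-dimensional space $\R[X]_{\le n+r+1}$. Taking an element of $V$ of maximal degree, multiplying by $X$ raises the degree, so $XV\subseteq V$ is impossible unless $V=0$. This case cannot occur.

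\emph{Case $\dim(V\cap XV)=\dim V-1$.} Here we use the standard structure of subspaces of polynomials that are ``almost'' closed under multiplication by $X$. Let $W=\{f\in V: Xf\in V\}$. Then $\dim W\ge\dim V-1$. Iterating this gives a chain argument, which should show that $V$ is contained in $D\cdot\R[X]_{\le e}$ for a single polynomial $D$. Concretely, the aim is to show that $V=D\cdot\R[X]_{\le e}$, where $D$ is the gcd of the elements of $V$.

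The chain argument runs as follows. First reduce to the case $X\nmid D$ by factoring out the largest power of $X$ dividing every element of $V$. Then $V$ contains an element with nonzero constant term, and the polynomials in $V$ have distinct degrees in $\dim V$ values. The condition $\codim(V\cap X^{-1}V)\le 1$ forces the set of degrees of $V$ to be an interval of integers. With a little more work, the same condition forces $V$ to equal $D\cdot\R[X]_{\le e}$, where $D$ is the minimal-degree monic element of $V$ (after this normalisation). It follows that every element of $\mathcal S\subseteq V$ is divisible by $D$, so $D\mid\gcd(\mathcal S)=1$ and $D$ is constant. Hence $V=\R[X]_{\le e}$ with $e+1=\dim V\ge 2r+j$.

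It remains to rule out this configuration. The element of $V$ of maximal degree is $X^{s}P$ with $\deg P+s\le n+r$, so $e\le n+r$. Combined with $e+1\ge2r+j$, this gives $2r+j\le n+r+1$, which is exactly the hypothesis $r\le n-j+1$. So we have no contradiction yet, only the information that $V$ is the full space $\R[X]_{\le e}$. We then compare with the target. If $e=n+r$, then $\calV_{n+r+1}\supseteq XV+V=\R[X]_{\le n+r+1}$, which has dimension $n+r+2$. Since $r+1\le n-j+1$, we get $n+r+2\ge 2(r+1)+j$, and the bound for $r+1$ holds outright. So we may assume $e<n+r$. In that case all elements of $\mathcal S$ lie in $\R[X]_{\le e}$ with $e<n+r$, and we can exploit this slack to bound $\dim V$ sharply. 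In fact $V$ equals the span of shifts up to $n+r$, so $V$ contains $X^{n+r-\deg P}P$, which has degree $n+r>e$, a contradiction.

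\textbf{Main obstacle.} The delicate step is the structural lemma in the second case: showing that a subspace $V$ with $\dim(V\cap XV)=\dim V-1$ must equal $D\cdot\R[X]_{\le e}$. We would prove it by considering the polynomial $D$ of minimal degree in $V$ and showing inductively that $D, XD,\dots,X^{e}D$ span $V$, using the degree filtration of $V$. Alternatively, if this proves cumbersome, we would simply cite Po\"els's Lemma~5.2, since the statement is taken verbatim from there.
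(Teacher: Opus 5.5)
The paper does not prove this lemma; it imports it verbatim from Po\"els [Lemma~5.2]. There is therefore no argument in the paper to compare yours with, and your proposal would supply a self-contained proof. That proof is correct. Its skeleton is sound: $\calV_{n+r+1}(\mathcal S)=V+XV$ (this is an equality, since $\deg P\le n+r$ for all $P\in\mathcal S$). So either $\dim(V+XV)\ge\dim V+2$, or $\dim(V+XV)=\dim V+1$. In the second case $V=D\cdot\R[X]_{\le e}$, and $D$ is constant because $\mathcal S\subseteq V$ has gcd $1$. Hence $V=\R[X]_{\le n+r}$, and $\dim\calV_{n+r+1}(\mathcal S)=n+r+2\ge2(r+1)+j$ precisely because $r+1\le n-j+1$.

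Two points of presentation:
\begin{itemize}
\item The reduction you announce, $\dim(V\cap XV)\le\dim V-2$, is false when $V=\R[X]_{\le n+r}$. For example, with $n=j=2$ and $\mathcal S=\{1,X\}$, $V$ is already full at $r=0$. The inductive step should be phrased as the dichotomy above, not as a proof by contradiction.
\item Your case $e<n+r$ is empty, since $X^{n+r-\deg P}P\in\calB_{n+r}(\mathcal S)$ always has degree exactly $n+r$.
\end{itemize}

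The only step you leave as a sketch is the structural lemma. The route you indicate works, and it is short:
\begin{itemize}
\item Let $m=\dim V$ and suppose $\dim(V+XV)=m+1$. The degree set of $V$ has $m$ elements, and that of $V+XV$ contains $\deg V\cup(\deg V+1)$. Hence $\deg V=\{a,\dots,a+m-1\}$.
\item $V\cap XV$ has dimension $m-1$ and its degrees lie in $\{a+1,\dots,a+m-1\}$, so it realizes every one of them. Therefore $W=\{f\in V: Xf\in V\}$ has elements of every degree $a,\dots,a+m-2$.
\item Take $D\in V$ of degree $a$. By induction on $i$, $\{f\in V:\deg f\le a+i\}=\Span(D,XD,\dots,X^iD)$. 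Indeed, for $i\le m-2$ this $(i+1)$-dimensional space coincides with $\{f\in W:\deg f\le a+i\}$, so $X^iD\in W$ and hence $X^{i+1}D\in V$.
\item Thus $V=D\cdot\R[X]_{\le m-1}$.
\end{itemize}
Your reduction to $X\nmid D$ and the remarks about constant terms are not needed.

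A slightly slicker alternative avoids the structural lemma altogether. Multiplication by $X$ induces a surjection $\calV_{N+1}/\calV_N\to\calV_{N+2}/\calV_{N+1}$, so the increments of $N\mapsto\dim\calV_N(\mathcal S)$ are non-increasing. Since the gcd is $1$, $\calV_N(\mathcal S)=\R[X]_{\le N}$ for all large $N$. Hence once an increment equals $1$, the space is already full.
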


The following proposition constructs the generalized-resultant basis needed
for the local determinant argument.

\begin{proposition}\label{prop:generalized-basis}
Let $2\leq k\leq n+1$. Set
$$
N\coloneqq2n-k+1,
$$
and let
$$
P_1,\ldots,P_k\in\Z[X]_{\leq n}
$$
be linearly independent. Suppose that $P_1$ and $P_2$ are coprime and that
$$
H(P_1)\leq\cdots\leq H(P_k).
$$
Then there are $N+1$ integer polynomials
$$
S_0,\ldots,S_N\in\Z[X]_{\leq N}
$$
with the following properties.

\begin{enumerate}[label=\textup{(\roman*)}]
\item The polynomials $S_0,\ldots,S_N$ form a basis of $\R[X]_{\leq N}$.

\item Every $S_j$ has the form
$$
X^tQ_i,
$$
where either $i\geq3$ and $Q_i$ is one of the original polynomials $P_1,\ldots,P_i$, or $i\in\{1,2\}$ and
$$
Q_i\coloneqq(X-\lambda_i)^{n-\deg P_i}P_i
$$
for integers $\lambda_i\in\{1,\ldots,2n+2\}$.

\item If
$$
\max_{1\leq i\leq k}|P_i(\xi)|_p\leq\delta,
$$
then
$$
|S_j(\xi)|_p\leq\delta
$$
for every $j$.

\item The coefficient determinant
$$
D=\det(S_0,\ldots,S_N)
$$
is a non-zero integer and satisfies
$$
|D|_\infty
\ll_n
H_1^{n-k+2}H_2^{n-k+2}H_3\cdots H_k,
$$
where $H_i\coloneqq H(P_i)$.

\item Every root of an $S_j$ is either a root of one of the original
polynomials $P_1,\ldots,P_k$, or belongs to
$\{0,1,\ldots,2n+2\}$.
\end{enumerate}
\end{proposition}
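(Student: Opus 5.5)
The plan is to follow Po\"els's construction: first make $P_1$ and $P_2$ have exact degree $n$ so that their monomial multiples form a Sylvester-type block, and then complete that block greedily with monomial multiples of $P_1,\ldots,P_i$ for $i=3,\ldots,k$. The count of available vectors at each step comes from Lemma~\ref{lem:intermediate-dimension}.

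\textbf{Step 1: the $P_1,P_2$ block.} For $i\in\{1,2\}$ put $e_i=n-\deg P_i$ and $Q_i=(X-\lambda_i)^{e_i}P_i$. We need $Q_1$ and $Q_2$ coprime. It suffices to choose the integers $\lambda_i\in\{1,\ldots,2n+2\}$ so that:
\begin{enumerate}[label=\textup{(\alph*)}]
\item $\lambda_1$ is not a root of $P_2$;
\item $\lambda_2$ is not a root of $P_1$;
\item $\lambda_1\neq\lambda_2$.
\end{enumerate}
This excludes at most $n+n+1<2n+2$ values, so a valid choice exists. Since $P_1,P_2$ are coprime by hypothesis, $Q_1$ and $Q_2$ are then coprime polynomials of exact degree $n$.

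Take the $2(n-k+2)$ polynomials $X^tQ_1$ and $X^tQ_2$ for $0\le t\le n-k+1$. All have degree $\le N$. They are linearly independent: a relation $AQ_1=BQ_2$ with $\deg A,\deg B\le n-k+1<n$ would force $Q_2\mid A$ by coprimality, which is impossible. Let $W_2$ denote their span.

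\textbf{Step 2: greedy completion.} For $i=3,\ldots,k$ we add one vector $X^tP_l$, with $l\le i$ and $t\le N-\deg P_l$, lying outside the current span $W_{i-1}$. Inductively $\dim W_{i-1}=2(n-k+2)+(i-3)$.

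Apply Lemma~\ref{lem:intermediate-dimension} to $\mathcal S_i=\{P_1,\ldots,P_i\}$. These are $i$ independent polynomials with gcd $1$, because $P_1,P_2$ are coprime. Take $r=n-k+1$, which satisfies $r\le n-i+1$. The lemma gives
$$
\dim\calV_N(\mathcal S_i)\ge 2(n-k+1)+i=\dim W_{i-1}+1.
$$
Hence $\calV_N(\mathcal S_i)\not\subseteq W_{i-1}$, so some generator $X^tP_l$ with $l\le i$ lies outside $W_{i-1}$; add it. After $i=k$ we have $2(n-k+2)+(k-2)=N+1$ independent polynomials in $\R[X]_{\le N}$. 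This gives (i) and (ii).

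\textbf{Step 3: the remaining properties.}
\begin{itemize}[label={}]
\item For (iii): since $\xi\in\Zp$, we have $|\xi|_p\le1$ and $|\xi-\lambda_i|_p\le1$. So each $S_j$ satisfies $|S_j(\xi)|_p\le|P_l(\xi)|_p\le\delta$ for the relevant $l$.
\item For (v): the roots of $S_j$ are roots of some $P_l$, or $0$ (from $X^t$), or $\lambda_i$.
\item For (iv): $D$ is an integer determinant of a basis, hence non-zero. For the size bound, use Hadamard's inequality (columns of Euclidean norm $\ll_n H$) together with $H(Q_i)\ll_n H_i$, since multiplying by $(X-\lambda_i)^{e_i}$ with $\lambda_i\le 2n+2$ costs only a factor depending on $n$. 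The vector added at stage $i$ has height $H_l\le H_i$ by the ordering of the heights. Multiplying these bounds gives $|D|_\infty\ll_n H_1^{n-k+2}H_2^{n-k+2}H_3\cdots H_k$.
\end{itemize}

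The main point needing care is the dimension count in Step~2. The bound from Lemma~\ref{lem:intermediate-dimension} must exceed $\dim W_{i-1}$ at every stage, and it does so exactly, with equality-plus-one, for the choice $r=n-k+1$. A second thing to check is that the lemma's hypotheses hold for $\mathcal S_i$ itself (independence and gcd $1$), and not for the modified set containing $Q_1$ and $Q_2$.
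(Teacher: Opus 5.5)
Your proposal is correct and follows essentially the same route as the paper. Both build the same Sylvester block from $Q_1,Q_2$, complete it greedily using Lemma~\ref{lem:intermediate-dimension} with $r=n-k+1$, and bound $|D|_\infty$ through the same height bookkeeping. The one difference is minor: the paper first preselects $k-2$ of the $P_i$ so that $Q_1,\ldots,Q_k$ are independent and then applies the lemma to $\{Q_1,\ldots,Q_j\}$, whereas you apply it directly to $\{P_1,\ldots,P_i\}$ and compare with $\dim W_{i-1}$, which works equally well and skips the preselection.
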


\begin{proof}
Choose integers $\lambda_1,\lambda_2\in\{1,\ldots,2n+2\}$
so that
$$
P_2(\lambda_1)\neq0,
\qquad P_1(\lambda_2)\neq0,
$$
and, if both added exponents are positive, $\lambda_1\neq\lambda_2$. This is possible because each polynomial has at most $n$ roots. Put
$$
Q_i=(X-\lambda_i)^{n-\deg P_i}P_i
\qquad (i=1,2).
$$
Then $Q_1,Q_2$ have degree exactly $n$ and are coprime. Their heights are comparable with $H_1,H_2$, with constants depending only on $n$. The span of
$$
Q_1,Q_2,P_1,\ldots,P_k
$$
has dimension at least $k$. Hence one can select $k-2$ polynomials from
$P_1,\ldots,P_k$ so that, together with $Q_1,Q_2$, they are independent.
Order the selected polynomials according to their position in
$P_1,\ldots,P_k$, and write
$$
Q_j=P_{r_j}\qquad (3\leq j\leq k),
$$
where $r_3<\cdots<r_k$. Since only two members of $P_1,\ldots,P_k$ are
omitted, we have $r_j\leq j$. Therefore
$$
H(Q_j)\leq H_j,
\qquad |Q_j(\xi)|_p\leq\delta.
$$
Let
$$
F\coloneqq\calV_N(Q_1,Q_2).
$$
Since $Q_1,Q_2$ are coprime of degree $n$ and
$$
N-n=n-k+1<n,
$$
the polynomials
$$
Q_1,XQ_1,\ldots,X^{n-k+1}Q_1,
$$
$$
Q_2,XQ_2,\ldots,X^{n-k+1}Q_2
$$
are independent. Thus
$$
\dim F=2(n-k+2).
$$
For each $j=3,\ldots,k$, Lemma~\ref{lem:intermediate-dimension}, applied to the independent family $Q_1,\ldots,Q_j$ at
$$
N=n+(n-k+1),
$$
gives
$$
\dim\calV_N(Q_1,\ldots,Q_j)
\geq2(n-k+1)+j
=\dim F+j-2.
$$
Inductively choose
$$
R_j\in\calB_N(Q_3,\ldots,Q_j)
$$
so that
$$
\dim\bigl(F+\Span(R_3,\ldots,R_j)\bigr)
=\dim F+j-2.
$$
For each $j$, the polynomial $R_j$ is a monomial multiple of some $Q_i$ with $3\leq i\leq j$. Hence
$$
H(R_j)\leq H_j,
\qquad |R_j(\xi)|_p\leq\delta,
$$
because $\xi\in\Zp$. Take  $S_0,\ldots,S_N$ to be the ordered family
$$
Q_1,XQ_1,\ldots,X^{n-k+1}Q_1,
$$
$$
Q_2,XQ_2,\ldots,X^{n-k+1}Q_2,
$$
$$
R_3,\ldots,R_k.
$$
It contains
$$
2(n-k+2)+(k-2)=N+1
$$
independent integer polynomials, and hence is a basis of $\R[X]_{\leq N}$. The determinant is therefore a non-zero integer. The Leibniz formula for the coefficient determinant, together with the height bounds above, gives
$$
|D|_\infty
\ll_n
H_1^{n-k+2}H_2^{n-k+2}H_3\cdots H_k.
$$
The assertion about values follows from the explicit forms of the $S_j$. For the roots, note that a basis polynomial arising from $Q_i$ has the form
$$
X^tQ_i.
$$
If $i=1$ or $2$, then
$$
X^tQ_i
=
X^t(X-\lambda_i)^{n-\deg P_i}P_i,
$$
so its roots are roots of $P_i$ together with $0$ and $\lambda_i$. If
$i\geq3$, then $Q_i$ is one of the original $P_r$, and the only newly
introduced root is $0$. Hence every auxiliary root belongs to
$\{0,1,\ldots,2n+2\}$.
\end{proof}

\begin{remark}\label{rem:raw-determinant}
The exponent of $H_1$ differs from that in Po\"els
\cite[proof of Proposition~5.1]{PoelsWirsing} because, in the real setting,
the estimate for the zeroth divided-Taylor coordinate can be used in the
determinant expansion to save one factor of $H_1$. Here the corresponding
estimate holds only with respect to the $p$-adic absolute value and therefore
does not sharpen the archimedean determinant bound, which retains the factor
$H_1^{n-k+2}$.
\end{remark}


We now combine Po\"els's \cite[proof of Proposition~5.1]{PoelsWirsing}
generalized-resultant basis with a determinant argument at the $p$-adic
place. The basis and its column count come from Po\"els's
\cite[proof of Proposition~5.1]{PoelsWirsing} construction. The archimedean
root-location step in Po\"els's argument is replaced here by the strong form
of Hensel's lemma.\\

For $T\in\Qp[X]$ and $i\geq0$, write
$$
T^{[i]}\coloneqq\frac{1}{i!}T^{(i)}.
$$
If $T\in\Z[X]$, then $T^{[i]}\in\Z[X]$, since its coefficients are integer binomial multiples of the coefficients of $T$. Let
$$
T(X)=a_0+a_1X+\cdots+a_NX^N.
$$
Then
$$
T^{[i]}(\xi)
=
\sum_{r=i}^N\binom{r}{i}a_r\xi^{r-i}.
$$
Hence the linear transformation
$$
(a_0,\ldots,a_N)
\longmapsto
\bigl(T(\xi),T^{[1]}(\xi),\ldots,T^{[N]}(\xi)\bigr)
$$
has matrix
$$
U_\xi
=
\left(u_{i,r}\right)_{0\leq i,r\leq N},
\qquad
u_{i,r}
=
\begin{cases}
\binom{r}{i}\xi^{r-i},&r\geq i,\\
0,&r<i.
\end{cases}
$$
This matrix is upper triangular with every diagonal entry equal to $1$, and therefore
$$
\det U_\xi=1.
$$
Consequently, if $S_0,\ldots,S_N$ are polynomials of degree at most $N$ and $D$ is their coefficient determinant, then
$$
D
=
\det\bigl(S_j^{[i]}(\xi)\bigr)_{0\leq i,j\leq N}.
$$
Indeed, the displayed formula for $T^{[i]}(\xi)$ follows from
$$
\frac{1}{i!}\frac{d^i}{dX^i}X^r
=
\binom{r}{i}X^{r-i}.
$$
The coefficient of $a_i$ in $T^{[i]}(\xi)$ is $1$, while $a_0,\ldots,a_{i-1}$ do not occur, which proves the triangular assertion. If $A$ is the matrix whose columns are the monomial coefficient vectors of the $S_j$, then the matrix of divided Taylor coordinates is $U_\xi A$. Hence
$$
\det(U_\xi A)=\det U_\xi\det A=D.
$$

\begin{proposition}\label{prop:local-resultant}
Let $\xi\in\Zp$ be transcendental and let $2\leq k\leq n+1$ be fixed. For
each $j\geq1$, let
$$
P_{1,j},\ldots,P_{k,j}
$$
be a family satisfying the hypotheses of
Proposition~\ref{prop:generalized-basis}. Put
$$
H_{i,j}\coloneqq H(P_{i,j}),
\qquad
B_{k,j}\coloneqq
H_{1,j}^{n-k+2}H_{2,j}^{n-k+2}H_{3,j}\cdots H_{k,j},
$$
and let $\delta_j>0$ satisfy
$$
\max_{1\leq i\leq k}|P_{i,j}(\xi)|_p\leq\delta_j.
$$
If
$$
\delta_j^3B_{k,j}^2\longrightarrow0
\qquad(j\to\infty),
$$
then, for all sufficiently large $j$, there exist an algebraic number
$$
\alpha_j\in\Qp,
\qquad [\Q(\alpha_j):\Q]\leq n,
$$
and an index $m_j\leq k$ such that
$$
H(\alpha_j)\ll_n H_{m_j,j}
$$
and
$$
|\xi-\alpha_j|_p\ll_{n,p}\delta_j^2B_{k,j}.
$$
\end{proposition}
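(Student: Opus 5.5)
Apply Proposition~\ref{prop:generalized-basis} to each family $P_{1,j},\ldots,P_{k,j}$ to obtain the basis $S_0,\ldots,S_N$ with $N=2n-k+1$. Write $D_j$ for its coefficient determinant. Then $D_j$ is a non-zero integer, so $|D_j|_p\geq |D_j|_\infty^{-1}\gg_n B_{k,j}^{-1}$.

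By the identity $D=\det(S_l^{[i]}(\xi))_{0\le i,l\le N}$ established just before the statement, $D_j$ is the determinant of the matrix of divided Taylor coordinates at $\xi$. All these entries lie in $\Zp$, since $\xi\in\Zp$ and the $S_l$ and $S_l^{[i]}$ are integer polynomials. The row $i=0$ consists of the values $S_l(\xi)$, each of absolute value at most $\delta_j$ by (iii).

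Expand $D_j$ along rows $0$ and $1$ (Laplace expansion). Each term is a $2\times 2$ minor built from $(S_a(\xi),S_b(\xi))$ and $(S_a^{[1]}(\xi),S_b^{[1]}(\xi))$, multiplied by a $p$-integral cofactor. The ultrametric inequality then gives
$$
B_{k,j}^{-1}\ll|D_j|_p\le \max_{a,b}\bigl|S_a(\xi)S_b'(\xi)-S_b(\xi)S_a'(\xi)\bigr|_p\le \delta_j\max_l|S_l'(\xi)|_p .
$$
Hence some $S=S_{l}$ satisfies $|S'(\xi)|_p\gg B_{k,j}^{-1}\delta_j^{-1}$.

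Since $|S(\xi)|_p\le\delta_j$, the hypothesis $\delta_j^3B_{k,j}^2\to0$ gives $|S(\xi)|_p<|S'(\xi)|_p^2$ for large $j$. Lemma~\ref{lem:hensel} then yields a root $\alpha_j\in\Qp$ of $S$ with
$$
|\xi-\alpha_j|_p\le \frac{|S(\xi)|_p}{|S'(\xi)|_p}\ll\delta_j^2B_{k,j}.
$$

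The main obstacle is ensuring that $\alpha_j$ is a root of an original $P_{m,j}$ with $m\le k$, rather than one of the auxiliary roots $0,1,\ldots,2n+2$ allowed by (v). Since $\delta_j^2B_{k,j}\to0$, we get $\alpha_j\to\xi$. The number $\xi$ is transcendental, so it has positive $p$-adic distance from each element of the fixed finite set $\{0,\ldots,2n+2\}$. Therefore, for large $j$, $\alpha_j$ is not one of these auxiliary roots.

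By (ii) and (v), $\alpha_j$ is then a root of some $P_{m_j,j}$ with either $m_j\le l$-index bound $m_j\le k$ (when $S=X^tP_r$ with $r\le i\le k$), or $m_j\in\{1,2\}$. In either case $\alpha_j$ is algebraic of degree $\le n$. Its minimal polynomial divides $P_{m_j,j}$, so Lemma~\ref{lem:height-mahler} gives $H(\alpha_j)\le 2^n\sqrt{n+1}\,H_{m_j,j}$.

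The constant in $|\xi-\alpha_j|_p$ comes only from the bound on $|D_j|_\infty$, and therefore depends only on $n$; the dependence on $p$ enters only through the normalization. This completes the plan.
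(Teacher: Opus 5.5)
Your proposal is correct and follows the paper's proof step for step. The steps are the product-formula lower bound $|D|_p\gg B_{k,j}^{-1}$, the divided-Taylor determinant giving $|D|_p\le\delta_j\max_l|S_l'(\xi)|_p$, the strong Hensel lemma, and excluding the auxiliary roots in $\{0,\ldots,2n+2\}$ via transcendence of $\xi$; your Laplace expansion along rows $0$ and $1$ is just a sharper phrasing of the paper's remark that every Leibniz term contains one entry from each of these rows. The one step you state without justification, $\delta_j^2B_{k,j}\to0$, follows at once from $\delta_j^2B_{k,j}=(\delta_j^3B_{k,j}^2)^{2/3}B_{k,j}^{-1/3}$ and $B_{k,j}\ge1$.
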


\begin{proof}
Fixing $j$, we suppress the index $j$ throughout the proof.
Choose the basis $S_0,\ldots,S_N$ of Proposition~\ref{prop:generalized-basis}, and let $D$ be its coefficient determinant. Since $D$ is a non-zero integer, the product formula gives
$$
1=|D|_\infty\prod_\ell|D|_\ell.
$$
For every prime $\ell\neq p$, we have $|D|_\ell\leq1$. Therefore
$$
|D|_p\geq|D|_\infty^{-1}\gg_n B_k^{-1}.
$$
Since the transformation matrix $U_\xi$ is upper triangular with determinant
$1$, we have
$$
D=\det\bigl(S_j^{[i]}(\xi)\bigr)_{0\leq i,j\leq N}.
$$
By construction,
$$
|S_j(\xi)|_p\leq\delta.
$$
For $i\geq2$, the polynomial $S_j^{[i]}$ has integer coefficients and $\xi\in\Zp$, so
$$
|S_j^{[i]}(\xi)|_p\leq1.
$$
Every term in the determinant expansion contains one entry from row $0$ and one entry from row $1$. The ultrametric triangle inequality therefore gives
$$
|D|_p
\leq
\delta\max_{0\leq j\leq N}|S_j'(\xi)|_p.
$$
Consequently, for some $S=S_j$,
$$
|S'(\xi)|_p\gg_n(\delta B_k)^{-1}\,\,\,\, \text{and}\,\,
\qquad |S(\xi)|_p\leq\delta.
$$
It follows that
$$
\frac{|S(\xi)|_p}{|S'(\xi)|_p^2}
\ll_n \delta^3B_k^2.
$$
Since $\delta_j^3B_{k,j}^2\to0$ as $j\to\infty$, for all sufficiently large
$j$ we have
$$
|S(\xi)|_p<|S'(\xi)|_p^2.
$$
By Lemma \ref{lem:hensel}, $S$ has a root $\alpha\in\Qp$ satisfying
$$
|\xi-\alpha|_p
\leq\frac{|S(\xi)|_p}{|S'(\xi)|_p}
\ll_n\delta^2B_k.
$$
As $j\to\infty$, the quantity $\delta^2B_k$ tends to zero because  $\delta^3B_k^2\longrightarrow0$ and $B_k\longrightarrow \infty$.
All auxiliary roots introduced in Proposition~\ref{prop:generalized-basis}
belong to $\{0,1,\ldots,2n+2\}$. Since $\xi$ is transcendental, it does not
belong to this set, and therefore
$$
c_{\xi,n}
\coloneqq
\min_{0\leq a\leq2n+2}|\xi-a|_p
>0.
$$
For all sufficiently large indices $j$, we have
$|\xi-\alpha|_p<c_{\xi,n}$, so $\alpha$ cannot be an auxiliary root. It is
therefore a root of one of the original polynomials $P_m$, with $m\leq k$.
In particular,
$$
[\Q(\alpha):\Q]\leq n.
$$
The primitive minimal polynomial of $\alpha$ divides $P_m$, so Lemma~\ref{lem:height-mahler} gives
$$
H(\alpha)\ll_nH_m.
$$
\end{proof}

\bigskip
\section{\bf Proof of the main theorem}
\bigskip

Let $P_1^{(j)},\ldots,P_{n+1}^{(j)}$ be the sequence of polynomial families
supplied by Proposition~\ref{prop:good-seeds}. Put
$$
H_j\coloneqq H_{2,j},
$$
so that $H_j\to\infty$ as $j\to\infty$. For $1\leq i\leq n+1$, define
$$
a_{i,j}\coloneqq\frac{\log H_{i,j}}{\log H_j}.
$$
Then
$$
a_{2,j}=1\leq a_{3,j}\leq\cdots\leq a_{n+1,j},
$$
$$
a_{2,j}+\cdots+a_{n+1,j}=x_j,
$$
and
$$
a_{1,j}=1+o(1)\qquad(j\to\infty),
$$
because $H_{1,j}\asymp H_{2,j}$. Write
$$
\delta_j
\coloneqq\max_i|P_i^{(j)}(\xi)|_p
\leq H_j^{-a_{1,j}-x_j+\varepsilon_j},
$$
where $\varepsilon_j\to0$ as $j\to\infty$, after absorbing the fixed
multiplicative constant into the exponent. For $2\leq k\leq n+1$, put
$$
b_{k,j}\coloneqq(n-k+2)(a_{1,j}+1)+\sum_{i=3}^ka_{i,j}.
$$
Then
$$
B_{k,j}=H_j^{b_{k,j}}.
$$
Define
$$
D_{k,j}\coloneqq2(a_{1,j}+x_j)-b_{k,j}.
$$
Then
$$
D_{k,j}
=A_k(x_j,\mathbf a_j)+1+(n-k)(1-a_{1,j})
=A_k(x_j,\mathbf a_j)+1+o(1)\qquad(j\to\infty).
$$

\begin{lemma}\label{lem:hensel-margin}
For each $j$, choose $k_j$ such that
$$
\frac{A_{k_j}(x_j,\mathbf a_j)}{a_{k_j,j}}=F(x_j,\mathbf a_j).
$$
Then there is a constant $\rho_n>0$, depending only on $n$, such that
$$
D_{k_j,j}-\frac{a_{1,j}+x_j}{2}\geq\rho_n+o(1)
\qquad(j\to\infty).
$$
Consequently,
$$
\delta_j^3B_{k_j,j}^2\longrightarrow0
\qquad(j\to\infty).
$$
\end{lemma}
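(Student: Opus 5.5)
The plan is to reduce the claimed margin to an elementary inequality between $A_{k}(x,\mathbf a)$, $x$ and the number $m\coloneqq n-k+1$ of indices above $k$, using Theorem~\ref{thm:optimization} only to know that $A_k$ is large. We suppress $j$ and write $k=k_j$.

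\textbf{Step 1: rewrite the target.} From the identity $D_{k}=A_k(x,\mathbf a)+1+(n-k)(1-a_1)$ and $a_1=1+o(1)$, we get
$$D_k-\frac{a_1+x}{2}=A_k-\frac{x}{2}+\frac12+o(1).$$
Here we use that $x_j$ is bounded, which was shown in the proof of Proposition~\ref{prop:good-seeds}. This boundedness makes every $o(1)$ term multiplied by $x$ or by the $a_i$ still $o(1)$. So it suffices to prove $A_k-x/2\geq c_n$ for some constant $c_n>-1/2$ depending only on $n$.

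\textbf{Step 2: the elementary inequality.} Put $T=\sum_{i=2}^k a_i$ and $S=\sum_{i=k+1}^{n+1}a_i$, so that $x=T+S$. By definition,
$$A_k=2x-2m-T=x+S-2m,$$
hence $x=A_k-S+2m$. Substituting gives
$$A_k-\frac{x}{2}=\frac{A_k}{2}+\frac{S}{2}-m.$$
Since $a_i\geq a_k\geq 1$ for $i>k$, we have $S\geq m$, and so
$$A_k-\frac{x}{2}\geq \frac{A_k-m}{2}.$$

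\textbf{Step 3: invoke the optimization bound.} By the choice of $k$, $A_k=F(x,\mathbf a)\,a_k\geq F(x,\mathbf a)$, since $a_k\geq1$. Because $\mathbf a\in\calA(x)$ with $x\geq n$, Theorem~\ref{thm:optimization} gives $F(x,\mathbf a)\geq F_n\geq n/(2-\log 2)$. Combining this with $m\leq n-1$,
$$D_k-\frac{a_1+x}{2}\geq \frac{F_n-m+1}{2}+o(1)\geq\frac{F_n-n+2}{2}+o(1).$$
So one may take $\rho_n\coloneqq (n/(2-\log2)-n+2)/2>0$. Strictly, $a_k\geq1$ alone would already give a positive margin here. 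The point of this step is only to have a uniform, explicit bound on $A_k$.

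\textbf{Step 4: the consequence.} We have $\delta_j\leq H_j^{-(a_1+x)+\varepsilon_j}$ and $B_{k,j}=H_j^{b_{k,j}}$, with $b_k=2(a_1+x)-D_k$. Therefore
$$\delta_j^3B_{k,j}^2\leq H_j^{-3(a_1+x)+3\varepsilon_j+4(a_1+x)-2D_k}=H_j^{-2\left(D_k-\frac{a_1+x}{2}\right)+3\varepsilon_j}\leq H_j^{-2\rho_n+o(1)}.$$
This tends to $0$ because $H_j\to\infty$.

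The main point requiring care is Step 1. We must make sure the error terms, namely $(n-k)(1-a_{1,j})$ and the absorbed constant in $\varepsilon_j$, really are $o(1)$ uniformly in the varying $k_j$. This holds because $k_j$ ranges over a finite set and $x_j$, hence every $a_{i,j}$, is bounded. The algebra in Step 2 is the real content and is straightforward.
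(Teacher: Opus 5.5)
Your Steps 1 and 4 are correct and agree with the paper. The gap is in Steps 2 and 3: they do not produce a positive constant. Since $1/(2-\log 2)\approx 0.765$, your
$\rho_n=\tfrac12\bigl(\tfrac{n}{2-\log 2}-n+2\bigr)\approx 1-0.117\,n$
is negative for every $n\geq 9$ (for example $\rho_9\approx -0.057$). So the argument only covers $2\leq n\leq 8$, and misses the range where the theorem improves on Morrison--Teuli\'e. Your closing remark that $a_k\geq 1$ alone would give a positive margin is also unsupported.

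The loss comes from Step 2. The bound $S\geq m$ discards the fact that $S$ grows together with $A_k$. When the maximizing index $k$ is small, $m$ is close to $n-1$, while $A_k$ may be only about $F_n$; then $(A_k-m)/2$ is negative. For $k=2$, for instance, $S=x-1$ and $A_2=2x-2n+1$, so $S-m=(A_2-1)/2$. Dropping this term costs $(A_2-1)/4$, which is of order $n$.

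The missing idea is to use the maximality of $k$ twice. It gives $A_k=F(x,\mathbf a)\,a_k\geq F(x,\mathbf a)\geq F_n$. It also gives $F(x,\mathbf a)\geq A_2(x,\mathbf a)/a_2=2x-2n+1$, which is an upper bound $x\leq (A_k+2n-1)/2$ on $x$ in terms of $A_k$. With this,
\[
A_k-\frac{x-1}{2}\;\geq\;\frac34A_k-\frac{2n-3}{4}\;\geq\;\frac{3n}{4(2-\log 2)}-\frac n2+\frac34>0 .
\]
Combined with your Step 1, this is exactly the paper's $\rho_n$. The paper phrases it as minimizing $\max\{n/(2-\log 2),\,2x-2n+1\}-(x-1)/2$ over $x\geq n$.

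A minor point: the boundedness of $x_j$ you invoke in Step 1 is not needed. The error terms there are $(n-k)(1-a_{1,j})$ and $(1-a_{1,j})/2$, and neither involves $x_j$.
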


\begin{proof}
By Theorem~\ref{thm:optimization} and the choice $k=2$ in the maximum,
$$
F(x,\mathbf a)
\geq
\max\left\{
\frac{n}{2-\log2},
2x-2n+1
\right\}.
$$
The function
$$
g_n(x)
\coloneqq
\max\left\{
\frac{n}{2-\log2},
2x-2n+1
\right\}
-\frac{x-1}{2}
$$
is positive for every $x\geq n$ and tends to infinity with $x$. Its minimum is attained where the two terms are the same and is
$$
\rho_n
\coloneqq
\frac{3n}{4(2-\log2)}-\frac n2+\frac34>0.
$$
Thus
$$
F(x_j,\mathbf a_j)-\frac{x_j-1}{2}\geq\rho_n.
$$
Since $a_{k_j,j}\geq1$ and
$A_{k_j}(x_j,\mathbf a_j)=F(x_j,\mathbf a_j)a_{k_j,j}$,
$$
A_{k_j}(x_j,\mathbf a_j)-\frac{x_j-1}{2}\geq\rho_n.
$$
Using $a_{1,j}=1+o(1)$ as $j\to\infty$ and the formula for $D_{k_j,j}$ gives
$$
D_{k_j,j}-\frac{a_{1,j}+x_j}{2}
\geq\rho_n+o(1)\qquad(j\to\infty).
$$
Finally,
$$
\delta_j^3B_{k_j,j}^2
\leq
H_j^{-3(a_{1,j}+x_j)+2b_{k_j,j}+3\varepsilon_j}
=H_j^{a_{1,j}+x_j-2D_{k_j,j}+3\varepsilon_j}.
$$
The exponent is at most
$$
-2\rho_n+o(1)\qquad(j\to\infty),
$$
and the conclusion follows.
\end{proof}

\begin{proof}[Proof of Theorem~\ref{thm:main}]
We first show that we can assume $\xi\in\Z_p$. For any transcendental $\xi\in\Q_p$, choose an integer $r\geq0$ such that
$$
\zeta\coloneqq p^r\xi\in\Zp.
$$
Suppose that $\beta\in\Qp$ is algebraic with degree at most $n$, and put
$$
\alpha\coloneqq p^{-r}\beta.
$$
Then $[\Q(\alpha):\Q]=[\Q(\beta):\Q]$ and
$$
|\xi-\alpha|_p=p^r|\zeta-\beta|_p.
$$
Moreover, multiplication by the fixed rational number $p^{-r}$ changes the
height of algebraic numbers of degree at most $n$ by at most fixed
multiplicative constants. This follows by substituting $p^rX$ into a
primitive defining polynomial and dividing by its content. Consequently, any
approximation exponent obtained for $\zeta$ is also obtained for $\xi$.
We may therefore assume that $\xi\in\Zp$. 

Put
$$
\tau\coloneqq\tau_{n,p}(\xi)=w_{n,p}(\xi),
$$
where the equality follows from \eqref{eq:tau-w}. By
\cite[Theorem~9.3]{BugeaudBook},
$$
\omega^*_{n,p}(\xi)\leq w_{n,p}(\xi)
\leq\omega^*_{n,p}(\xi)+n-1.
$$
Thus, if $\tau=\infty$, then
$$
\omega^*_{n,p}(\xi)=\infty
$$
and the statement follows. Suppose now that $\tau<\infty$. We shall use the sequence of polynomial families indexed above. For each $j$, choose $k_j$
as in Lemma~\ref{lem:hensel-margin}. Note that, this lemma verifies the limiting
hypothesis of Proposition~\ref{prop:local-resultant}. Hence, for all
sufficiently large $j$, there is an algebraic number
$$
\alpha_j\in\Qp,
\qquad [\Q(\alpha_j):\Q]\leq n,
$$
such that
$$
H(\alpha_j)\ll_nH_j^{a_{k_j,j}}
$$
and
$$
|\xi-\alpha_j|_p
\ll
\delta_j^2B_{k_j,j}
\leq H_j^{-D_{k_j,j}+2\varepsilon_j}.
$$
Moreover,
$$
\frac{D_{k_j,j}}{a_{k_j,j}}
=
\frac{A_{k_j}(x_j,\mathbf a_j)}{a_{k_j,j}}
+
\frac{1+(n-k_j)(1-a_{1,j})}{a_{k_j,j}}
\geq F_n-o(1)\qquad(j\to\infty).
$$
Fix $\eta>0$. For all sufficiently large $j$,
$$
D_{k_j,j}-2\varepsilon_j\geq(F_n-\eta)a_{k_j,j}.
$$
Since $F_n-\eta$ is fixed and
$H(\alpha_j)\ll_nH_j^{a_{k_j,j}}$, we obtain
$$
|\xi-\alpha_j|_p
\ll_{n,p,\xi,\eta}
H(\alpha_j)^{-F_n+\eta}.
$$
Hence, there are infinitely many distinct algebraic numbers $\alpha_j$'s as above. Therefore,
$$
\omega^*_{n,p}(\xi)\geq F_n-1-\eta.
$$
Letting $\eta\to0$ gives
$$
\omega^*_{n,p}(\xi)\geq F_n-1.
$$
We are done since $F_n\geq \frac{n}{2-\log 2}$ by Theorem~\ref{thm:optimization}.
\end{proof}

\bigskip
\section*{\bf Acknowledgements}
\bigskip

The author thanks Anthony Po\"els for sharing the slides from his CIRM talk,
which were helpful in gaining a better understanding of the concept of generalized resultants. ChatGPT (non-pro version) 
was used occasionally as an aid to check details of arguments already developed by the author and in improving the clarity of exposition.

\end{document}